\DeclareMathAlphabet{\mathpzc}{OT1}{pzc}{m}{it}
\newtheorem{theorem}{Theorem}[section]
\newtheorem*{theorem*}{Theorem}
\newtheorem*{definition*}{Definition}
\newtheorem*{prop*}{Proposition}
\newtheorem*{cor*}{Corollary}
\newtheorem*{claim*}{Claim}
\newtheorem{lemma}[theorem]{Lemma}
\newtheorem{lem}[theorem]{Lemma}
\newtheorem{cor}[theorem]{Corollary}
\newtheorem{prop}[theorem]{Proposition}
\newtheorem{thm}[theorem]{Theorem}
\theoremstyle{definition}
\newtheorem{definition}[theorem]{Definition}
\newtheorem{Def}[theorem]{Definition}
\theoremstyle{remark}
\newtheorem{remark}[theorem]{Remark}
\newtheorem{Rmk}[theorem]{Remarks}
\newtheorem{Q}[theorem]{Question}
\numberwithin{equation}{section}
\newcommand{\F}{\cal F}
\newcommand{\op}{\operatorname}
\newcommand{\Ga}{\Gamma}
\newcommand{\ga}{\gamma}
\newcommand{\la}{\lambda}
\newcommand{\La}{\Lambda}
\newcommand{\ba}{\backslash}
\newcommand{\cal}{\mathcal}
\newcommand{\br}{\mathbb R}
\newcommand{\bH}{\mathbb H}
\newcommand{\G}{\Gamma}
\newcommand{\Furst}{\cal{F}}
\renewcommand{\frak}{\mathfrak}
\newcommand{\e}{\epsilon}
\newcommand{\be}{\begin{equation}}
	\newcommand{\ee}{\end{equation}}
\newcommand{\inte}{\op{int}}
\renewcommand{\L}{\mathcal L}
\newcommand{\fa}{\mathfrak a}
\newcommand{\fg}{\mathfrak{g}}
\newcommand{\fp}{\mathfrak{p}}
\renewcommand{\i}{\op{i}}
\newcommand{\fk}{\mathfrak{k}}
\newcommand{\z}{\mathbb Z}
\renewcommand{\e}{\varepsilon}
\renewcommand{\epsilon}{\varepsilon}
\newcommand{\pc}{P^{\circ}}
\newcommand{\E}{\mathcal E}
\newcommand{\Om}{\Omega}
\newcommand{\R}{\mathbb{R}}
\newcommand{\PSL}{\op{PSL}}
\begin{document}
	
	\graphicspath{ {} }
	
	\pagestyle{plain}
	\title{On denseness of horospheres in higher rank homogeneous spaces}
	\author{Or Landesberg}
	\address{Department of Mathematics, Yale University, New Haven, CT 06520}
	\email{or.landesberg@yale.edu}

\author{Hee Oh}
\address{Department of Mathematics, Yale University, New Haven, CT 06520 and Korea Institute for Advanced Study, Seoul, Korea}
\email{hee.oh@yale.edu}
\thanks{Oh is partially supported by NSF grant DMS-1900101}

	\begin{abstract}
		Let $ G $ be a connected semisimple real algebraic group and $\Ga<G$ be a Zariski dense discrete subgroup.
		Let $N$ denote  a maximal horospherical subgroup of $G$, and $P=MAN$ the minimal parabolic subgroup which is the normalizer of $N$.
		Let $\cal E$ denote the unique $P$-minimal subset of $\Ga\ba G$ and let $\E_0$ be a $P^\circ$-minimal subset.
		We consider a notion of a horospherical limit point in the Furstenberg boundary $ G/P $ and show that the following are equivalent for any $[g]\in \cal E_0$,
		\begin{enumerate}
		    \item $gP\in G/P$ is a horospherical limit point;
			\item $[g]NM$ is dense in $\cal E$;
	 		\item $[g]N$ is dense in $\cal E_0$.
		\end{enumerate}
		The equivalence of (1) and (2) is due to Dal'bo in the rank one case. We also show that unlike convex cocompact groups of rank one Lie groups, the $NM$-minimality of $\E$ does not hold in a general Anosov homogeneous space.
		\end{abstract}
	\maketitle

\section{Introduction}

Let $G$ be a connected semisimple real algebraic group. 
Let $(X, d)$ denote the associated Riemannian symmetric space. 
Let $P=MAN$ be a minimal parabolic subgroup of $G$ with fixed Langlands decomposition, where $A$ is a maximal real split torus of $G$, $M$ the maximal compact subgroup of $P$ commuting with $A$  and $N$ the unipotent radical of $P$. Note that $N$ is a maximal horospherical subgroup of $G$, which is unique up to conjugations.

Fix a positive Weyl chamber 
$\fa^+\subset \log A$ so that $\log N$ consists of positive root subspaces, and we set $A^+=\exp \fa^+$. This means that $N$ is a contracting horospherical subgroup in the sense that
for any $a$ in the interior of $A^+$,
$$N =\{g\in G: a^{-n} g a^n\to e\text{ as $n\to +\infty$}\}.$$

Let $\Ga$ be a {\it Zariski dense} discrete subgroup of $G$.
In this paper, we are interested in the topological behavior of the action of the
horospherical subgroup $N$ on $\Ga\ba G$ via the right translations. When $\Ga<G$ is a cocompact lattice,
every $N$-orbit is dense in $\Ga\ba G$, i.e., the $N$-action on $\Ga\ba G$ is minimal. This is due to Hedlund \cite{Hedlund} for $G=\PSL_2(\br)$ and
 to Veech \cite{Vee} in general.  Dani gave a full classification of possible orbit closures of $N$-action for any lattice $\Ga<G$ \cite{Dani}.

For a general discrete subgroup $\Ga<G$, the quotient space $\Ga\ba G$ does not necessarily admit a dense $N$-orbit, even a dense $NM$-orbit, for instance in the case where $\Ga$ does not have a full limit set. Let $\F$ denote the Furstenberg boundary $G/P$. 
We denote by $\La=\La_\Ga$ the limit set of $\Ga$, 
$$\La=\{\lim_{i\to \infty} \ga_i(o) \in \cal F : \ga_i\in \Ga\}$$
where $o\in X$ and the convergence is understood as in Definition \ref{conver}. This definition is independent of the choice of $o\in X$.
The limit set $\La$ is known to be the unique $\Ga$-minimal subset of $\F$ (see \cite{Benoist,Guivarch,LO1}).
Thus the set
$$\cal E=\{[g]\in \Ga\ba G: gP\in \La\}$$
is the unique $P$-minimal subset of $\Ga\ba G$.
For a given point $[g]\in \cal E$,
the topological behavior of the horospherical orbit $[g]N$ (or of $[g]NM$) is closely related to the ways in which the orbit $\Ga(o)$
approaches $gP$ along its limit cone. The limit cone $\L=\L_\Ga$ of $\Ga$ is defined as the smallest closed cone of $\fa^+$ containing the Jordan projection $\lambda(\Ga)$. It is a convex cone with non-empty interior: $\inte \L\ne \emptyset$ \cite{Benoist}.
If $\op{rank } G=1$, then $\L=\fa^+$. In higher ranks, the limit cone of $\G$ depends more subtly on $\Ga$.

 \subsection*{Horospherical limit points} Recall that in the rank one case, a horoball in $X$ based at $\xi\in \F$ is
 a subset of the form $gN (\exp \fa^+ )( o)$
 where $g\in G$ is such that $\xi=gP$ \cite{Dal}.
Our generalization to higher rank of the notion of a horospherical limit point involves the limit cone of $\Gamma$.  By a $\Gamma$-tight horoball based at $\xi\in \F$, we mean a subset of the form $\cal H_{\xi}=g N (\exp \cal C)  (o)$ where $g\in G$ is such that $\xi=gP$ and $\cal C$ is a closed cone contained in $\inte \L\cup\{0\}$.
		For $T>0$, we write 
		\[\cal H_{\xi}(T)= gN (\exp (\cal C-\cal C_T))o\] 
		where $\cal C_T=\{u\in \cal C: \|u\|<T\}$ for a Euclidean norm $\|\cdot\|$ on $\fa$.

      \begin{Def} We call a limit point $\xi\in \Lambda$ a horospherical limit point of $\Ga$ if 
      one of the following equivalent conditions holds:
      \begin{itemize}[leftmargin=*]
          \item 
   there exists a $\Ga$-tight horoball $\cal H_\xi$ based at $\xi$ such that for any $T>1$, $\cal H_\xi(T)$ contains some point of $\Gamma (o)$;
   \item  there exist a closed cone $\cal C \subset {\inte \L} \cup \{0\}$ and a sequence $ \gamma_j \in \Gamma $ satisfying that
	 $\beta_\xi(o,\gamma_jo) \in \cal C$  for all $j\ge 1$ and
  $ \beta_\xi(o,\gamma_jo) \to \infty $ as $j\to \infty$,
 where $\beta$ denotes the $\fa$-valued Busemann map (Definition \ref{bus}).    \end{itemize}
\end{Def}

\noindent See  Lemma \ref{hd} for the equivalence of the above two conditions.
We denote by $$\La_h\subset \La$$ the set of all horospherical limit points of $\Ga$. The attracting fixed point $y_\ga$ of a loxodromic element $\ga\in \Ga$ whose Jordan projection $\lambda(\gamma)$ belongs to $\inte \L$ is always a horospherical limit point (Lemma \ref{hor}). Moreover, for any $u\in \inte \L$, any $u$-directional radial limit point $\xi$
(i.e, $\xi=gP$ for some $g\in G$ such that $\limsup_{t\to \infty} \Gamma g \exp (tu)\ne \emptyset$)  is also a horospherical limit point (Lemma \ref{st}).

\begin{Rmk}\;
	\begin{enumerate}
		\item There exists a notion of horospherical limit points in the geometric boundary associated to a symmetric space, see \cite{Hattori}. When $\op{rank }G \geq 2 $, this notion and the one considered here are different.
		\item Unlike the rank one case, a sequence $\gamma_i(o)\in \cal H_{\xi}(T_i)$, with $T_i\to\infty$, does not necessarily {\it converge} to $\xi$ for a $\Gamma$-tight horoball $\cal H_\xi$ based at $\xi$. It is hence plausible that a general discrete group $ \Gamma $ would support a horospherical limit point outside of its limit set. 
	\end{enumerate}
\end{Rmk}

\subsection*{Denseness of horospheres} The following theorem generalizes Dal'bo's theorem \cite{Dal} to discrete subgroups in higher rank semisimple Lie groups:
\begin{thm}\label{m1} Let $\Ga<G$ be a Zariski dense discrete
subgroup. For any $[g]\in \cal E$, the following are equivalent:
\begin{enumerate}
\item $gP\in \La_h$;
 \item $[g] NM$ is dense in $\cal E$. 
 \end{enumerate}
\end{thm}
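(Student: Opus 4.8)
The plan is to prove the two implications separately, reducing each to a recurrence statement that the $\mathfrak a$-valued Busemann cocycle reads off. Throughout write $\xi=gP$ and $Y=\ov{[g]NM}$. Since $NM\subset P$ and $\E$ is the unique $P$-minimal set, $Y\subseteq \E$ automatically, so for the forward direction it suffices to prove that $Y$ is $P$-invariant. Because $A$ normalizes $N$ and commutes with $M$, one has $aNM=NMa$ for every $a\in A$, whence $[g]aNM=([g]NM)a$; taking closures shows that $[g]a\in Y$ forces $Ya\subseteq Y$. Thus the whole forward direction reduces to the single claim $[g]A\subseteq Y$: once this holds, $YA=Y$, so $Y$ is invariant under $MN$ and $A$, hence under $P=MAN$, and $P$-minimality of $\E$ (established earlier via $\Ga$-minimality of $\La$) gives $Y=\E$.

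For (1)$\Rightarrow$(2), I would feed in the first characterization of a horospherical limit point: choosing $\cal C\subset \inte\L\cup\{0\}$ with nonempty interior, the condition produces $\gamma_j\in\Ga$ with $g^{-1}\gamma_j=n_j\exp(c_j)k_j$ in the $\xi$-adapted Iwasawa form, where $n_j\in N$, $k_j\in K$, $c_j=\beta_\xi(o,\gamma_j o)\in\cal C$, and $\|c_j\|\to\infty$. The point here is that $\cal C$ lies in $\inte\L\subset\inte\fa^+$, so the directions $c_j/\|c_j\|$ stay in a compact subset of the open chamber; this is exactly what makes $\exp(c_j)$ contract $N$ strongly. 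The basic mechanism is the return identity $[g]n_j\exp(c_j)=[\gamma_j k_j^{-1}]=[k_j^{-1}]$, which lands in the compact set $\Ga\ba\Ga K$; passing to a subsequence, $k_j^{-1}\to k_\infty$ and $[g]n_j\exp(c_j)\to[k_\infty]$. Combining this recurrence with the $N$-contraction under $\exp(c_j)$ lets one transport $N$-orbit data to the limit and, crucially, realize the limiting $A$-direction $u_0=\lim c_j/\|c_j\|$ inside $Y$: by feeding the recurring point $[k_\infty]$ back into the same horospherical mechanism one generates a sub-semigroup of cone directions along which $Y$ is invariant. Since $\cal C$ has interior, $u_0$ may be varied over an open set of directions, and the compactness of $M$ is used to absorb the residual discrepancy between the $NA$-data actually produced and the $NM$-orbit; together these upgrade the cone-semigroup invariance to full invariance $[g]A\subseteq Y$.

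The main obstacle is precisely this last upgrade. Unlike the rank one case, the returning sequence $\gamma_j(o)$ need not converge to $\xi$ (the second Remark in the excerpt), so the contraction and the compact return only directly yield points of $\ov{[g]NA}$, not of $Y=\ov{[g]NM}$; bridging the gap between the $A$-factor forced by $\exp(c_j)$ and the compact $M$-factor is the technical heart of the argument, and it is exactly here that the hypothesis $\cal C\subset\inte\L$ (an open cone of admissible recurrence directions) rather than $\cal C\subset\fa^+$ is indispensable.

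For the converse (2)$\Rightarrow$(1), I would argue by recurrence in a fixed interior direction. Since $\inte\L\neq\emptyset$, Lemma \ref{hor} supplies a loxodromic $\gamma_0\in\Ga$ with $\lambda(\gamma_0)\in\inte\L$, giving a point $[g_0]\in\E$ with $g_0P=y_{\gamma_0}$ whose $A$-orbit is periodic up to $M$, i.e.\ $[g_0]\exp(\lambda(\gamma_0))=[g_0]m_0$ for some $m_0\in M$. Using the assumed density of $[g]NM$ in $\E$, choose $n_j\in N$ and $m_j\in M$ with $[gn_jm_j]\to[g_0]$, and then flow along the periodic direction $\lambda(\gamma_0)$ for longer and longer times. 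Each long return near $[g_0]$ produces an element of $\Ga$ conjugate to a power of $\gamma_0$; reading off $\beta_\xi(o,\cdot)$ along this returning-then-flowing sequence yields values $c_j\to\infty$ whose directions concentrate on $\lambda(\gamma_0)\in\inte\L$, so they lie in a fixed closed cone inside $\inte\L\cup\{0\}$. This is exactly the second defining condition of $\xi\in\La_h$ (Lemma \ref{hd}). The delicate point in this direction is controlling the $\mathfrak a$-valued cocycle uniformly along the approximation so that the resulting Busemann values remain trapped in a single cone interior to $\L$ while escaping to infinity; here the strong contraction near $y_{\gamma_0}$ and the $M$-periodicity of the $A$-orbit of $[g_0]$ are what keep the direction from drifting toward the walls.
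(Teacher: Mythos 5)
Your reduction of $(1)\Rightarrow(2)$ to the single claim $[g]A\subseteq Y$ is a correct reformulation (a nonempty closed $P$-invariant subset of the minimal set $\E$ must equal $\E$), but the mechanism you propose for proving that claim does not close, and your own text essentially concedes this. Two concrete problems. First, the return identity $[g]n_j\exp(c_j)=[k_j^{-1}]$ only shows that translates of $[g]N$ by $\exp(c_j)$ accumulate in $\Ga\ba \Ga K$; the accumulation point $[k_\infty]$ lies in $\overline{[g]NA}$ with the $A$-parts escaping along $\cal C$, and nothing you write places it, or any point of $[g]A$, inside $Y=\overline{[g]NM}$. Second, even granting invariance of $Y$ under $\exp(c)$ for recurrence directions $c$, these all lie in a closed cone inside $\inte \fa^+$, so the semigroup they generate is confined to the half-space $\{\alpha\ge 0\}$ for each simple root $\alpha$ and is nowhere near dense in $\fa$; to promote $NM$-invariance to $P$-invariance you need invariance under \emph{negative} multiples of the directions as well. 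The paper obtains exactly this two-sidedness from $u$-periodic points: if $x a_u = x m_0$ with $m_0\in M$, then $x a_{ku}=x m_0^{k}$ for all $k\in\z$, so $\overline{xNM}\exp(\z u)=\overline{xNM}$. Producing such periodic points inside $Y$ is the content of Propositions \ref{prop_horo_to_tau_horo} and \ref{prop_tau_horo_to_periodic} (a horospherical limit point is $u$-horospherical for \emph{every} $u\in\inte\L$, and the $N$-orbit closure at a $u$-horospherical point with $u\in\lambda(\Ga)$ contains a $u$-periodic point), and Benoist's density in $\fa$ of the group generated by $\lambda(\Ga)\cap\inte\L$ then supplies directions $u_1,\dots,u_q$ making $\exp(\sum_i\z u_i)$ an $\e$-net of $A$. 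Neither this machinery nor a substitute for it appears in your sketch; the sentence beginning ``by feeding the recurring point $[k_\infty]$ back into the same horospherical mechanism'' is precisely the step that requires proof.

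Your converse direction is workable but more roundabout than the paper's: since $[g]NM$ is dense in $\E$ and $[g]a_{-2tu}\in\E$ for $u\in\inte\L$, one can directly find $\ga_i g n_i m_i q_i = g a_{-2tu}$ with $q_i\to e$, whence $\ga_i^{-1}g(o)\in\cal H_\xi(t)$ for all large $i$. No periodic orbit or long-time flow is needed; your route would additionally force you to control the expansion of the $\check N$-component of the approximation error under conjugation by $a_{t\lambda(\ga_0)}$, by choosing the return time as a function of $j$, which you do not address.
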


\begin{Rmk}
  Conze and Guivarc'h  considered the notion of  a horospherical limit point for Zariski dense discrete subgroups  $\Gamma$ of $\op{SL}_d(\br)$ using the description of $\op{SL}_d(\br)/P$ as the full flag variety and the standard linear action of $\Ga$ on $\br^d$ 
  \cite{Conze-Guivarch}. By duality, this notion coincides with ours and hence the special case of Theorem \ref{m1} for $G=\op{SL}_d(\br)$ also follows from \cite[Theorem 4.2]{Conze-Guivarch}\footnote{We remark that in \cite[Theorem 6.3]{Conze-Guivarch} it is moreover claimed that for every Zariski-dense Schottky group $\Gamma < \op{SL}_d(\R)$, the $NM$-action on $\cal E$ is minimal. We were unable to verify this claim as its proof seems to contain a significant, possibly unfixable, gap.}.
\end{Rmk}

In order to extend Theorem \ref{m1} to $N$-orbits, we  fix a $P^\circ$-minimal subset $\cal E_0$ of $\Ga\ba G$ where
$P^\circ$ denotes the identity component of $P$. Clearly, $\cal E_0\subset \cal E$.
Since $P=P^\circ M$,  any $P^\circ$-minimal subset is a translate of $\cal E_0$ by an element of the finite group $M^\circ \backslash M$, where $ M^\circ $ is the identity component of $ M $. Denote by $ \frak D_\Gamma = \{\cal E_0,..., \cal E_p\} $ the finite collection of all $ P^\circ $-minimal sets in $ \cal E $. 
In order to understand $N$-orbit closures it is hence sufficient to restrict to $\cal E_0$.

The following is a refinement of Theorem \ref{m1}: \begin{thm}\label{m2}
Let $\Ga<G$ be a Zariski dense discrete
subgroup. For any $[g]\in \cal E_0$, the following are equivalent:
\begin{enumerate}
\item $gP\in \La_h$;
 \item $[g] N$ is dense in $\cal E_0$. 
 \end{enumerate}
 \end{thm}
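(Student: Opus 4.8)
The plan is to deduce Theorem~\ref{m2} from Theorem~\ref{m1} by exploiting the finite-index relation $P=\pc M$; the entire difficulty is concentrated in removing the compact factor $M$. I begin with the implication $(2)\Rightarrow(1)$. Note first that $\E_0 M$ is closed (as $M$ is compact), nonempty, contained in $\E$, and $P$-invariant: since $M$ normalises $\pc=\mc AN$ one has $(\E_0 M)\pc=\E_0(M\pc)=\E_0(\pc M)=(\E_0\pc)M=\E_0 M$, and $P=\pc M$. By $P$-minimality of $\E$ this forces $\E_0 M=\E$. Hence if $[g]N$ is dense in $\E_0$ then, $M$ being compact, $\overline{[g]NM}=\overline{[g]N}\,M=\E_0 M=\E$, so $[g]NM$ is dense in $\E$ and Theorem~\ref{m1} gives $gP\in\La_h$.

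For the main implication $(1)\Rightarrow(2)$, suppose $gP\in\La_h$ and set $Y:=\overline{[g]N}$. Since $N\subset\pc$ and $\E_0$ is a closed $\pc$-invariant set, $Y$ is a closed $N$-invariant subset of $\E_0$, and Theorem~\ref{m1} gives $\overline{[g]NM}=YM=\E$. The members of $\mathfrak D_\Gamma=\{\E_0,\dots,\E_p\}$ are pairwise disjoint $\pc$-minimal sets whose union is $\E_0 M=\E$, so each $\E_i$ is clopen in $\E$. Let $M_0:=\{m\in M:\E_0 m=\E_0\}$, a closed subgroup of $M$ containing $\mc$. I claim $YM_0=\E_0$: given $[x]\in\E_0$, choose from $YM=\E$ an element $m\in M$ with $[x]m^{-1}\in Y\subseteq\E_0$; then $[x]\in\E_0\cap\E_0 m$, and disjointness of distinct members of $\mathfrak D_\Gamma$ forces $m\in M_0$. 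Thus $\E_0\subseteq YM_0$, and the reverse inclusion is clear.

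It remains to remove $M_0$. Because $M$ normalises $N$ we have $[g]Nm=[g]mN$ for $m\in M$, hence $Ym=\overline{[gm]N}$. Consider $S:=\{m\in M:[g]m\in Y\}$. If $m\in S$ then $[gm]\in Y$, and $Y$ being closed and $N$-invariant gives $\overline{[gm]N}=Ym\subseteq Y$; conversely $Ym\subseteq Y$ yields $[gm]\in Ym\subseteq Y$. Thus $S=\{m:Ym\subseteq Y\}$ is a closed sub-semigroup of the compact group $M$ containing $e$, hence a closed subgroup, and $YS=Y$. Moreover $S\subseteq M_0$, since $m\in S$ forces $[g]m\in Y\subseteq\E_0$ and therefore $\E_0\cap\E_0 m\neq\emptyset$. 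Consequently $S$ is a closed subgroup of $M_0$, and if we prove $S=M_0$ then $Y=YS=YM_0=\E_0$ and the theorem follows. Equivalently, the task is to show that $[g]N$ accumulates on the entire orbit $[g]M_0$.

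The main obstacle is precisely this last step, the $M_0$-invariance of $Y$, and it is the only place where Zariski density of $\Gamma$ is genuinely needed; it is exactly the input that separates the $N$-action from the $NM$-action. The mechanism I anticipate: the second characterisation of $\La_h$ provides a cone $\mathcal C\subset\inte\L\cup\{0\}$ and $\gamma_j\in\Gamma$ with $\beta_{gP}(o,\gamma_j o)\in\mathcal C$ and $\beta_{gP}(o,\gamma_j o)\to\infty$; setting $a_j:=\exp\beta_{gP}(o,\gamma_j o)\in A^+$, these produce renormalised returns of $[g]$ into a bounded part of $\E_0$ carrying prescribed $M$-components $m_j\in M_0$. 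A contraction/shadowing argument along $N$ (using $a^{-1}Na\to e$ as $a\to\infty$ in $A^+$, together with the facts that $M$ normalises $N$ and centralises $A$) converts each such return into a point of $[g]N$ approaching $[g]m_j$, thereby depositing each $m_j$ into the closed subgroup $S$. Finally, Zariski density of $\Gamma$ forces the group generated by these $M$-components to be dense in $M_0$ --- a non-concentration statement of Benoist--Guivarc'h type --- so that $S=\overline{S}\supseteq M_0$, whence $S=M_0$. This density of the $M$-holonomy is the crux; everything preceding it is formal bookkeeping around $P=\pc M$.
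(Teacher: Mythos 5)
Your reduction is sound as far as it goes: the implication $(2)\Rightarrow(1)$ via $\E_0M=\E$ and Theorem \ref{m1} is correct, and the bookkeeping for $(1)\Rightarrow(2)$ --- that $Y=\overline{[g]N}$ satisfies $YM_0=\E_0$ for $M_0=\{m\in M:\E_0m=\E_0\}$, that $S=\{m\in M:[g]m\in Y\}$ is a closed subgroup with $YS=Y$, and that $S=M_0$ would finish the proof --- is all verifiable. But the entire content of the theorem is concentrated in the step you leave as an ``anticipated mechanism,'' and that mechanism, as described, does not work. Horosphericity of $gP$ only says that $\Ga(o)$ enters $\cal H_{gP}(T)$ for every $T$, i.e.\ that $[g]N\exp(u_j)$ meets a fixed compact set for some $u_j\to\infty$ in a tight cone; contracting along $N$ then deposits \emph{some} accumulation point of the form $[h_j]m_j$ into $Y$, with $[h_j]$ a priori unrelated to $[g]$. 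Nothing in the hypothesis gives recurrence back to the fiber $[g]M$, so there is no reason these returns should ``approach $[g]m_j$'' and place the holonomies $m_j$ inside your subgroup $S$. You also do not supply the density statement for these holonomies in $M_0$, which you correctly identify as the crux but only assert.

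The paper routes around exactly this obstruction differently. It first upgrades ``horospherical'' to ``$u$-horospherical'' for every $u\in\inte\L$ (Proposition \ref{prop_horo_to_tau_horo}), then shows that for $u\in\lambda(\Ga)\cap\inte\L$ the closure $\overline{[g]N}$ contains a \emph{$u$-periodic} point $x_0$ (Proposition \ref{prop_tau_horo_to_periodic}); crucially $x_0$ lies in some $[g_0]AM$ rather than in $[g]M$, which is why no recurrence to the original fiber is needed. The density $\overline{x_0N}=\E_0$ is then obtained not from any holonomy-generation argument but from the Chow--Sarkar topological mixing theorem for $\exp(tu)$ on $\Omega_0$ (Theorem \ref{cs}, via Proposition \ref{pmix} and Corollary \ref{cmix}). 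That external mixing input is the engine that replaces the ``density of $M$-holonomy'' you hope for, and it is absent from your proposal. As written, you have the easy direction and a correct but empty reduction; the hard implication remains unproved.
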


\begin{remark}
	We may consider horospherical limit points outside the context of $\La$. In this case our proofs of Theorems \ref{m1} and \ref{m2} show that if $gP\in \cal F$ is a horospherical limit point,
then the closures of $[g]MN$ and $[g]N$ contain $\cal E$ and $\cal E_i $, for some $ \cal E_i \in \frak D_\Gamma $, respectively.

\end{remark}

For $G=\op{SO}^\circ({n,1})$, $n\ge 2$, Theorem \ref{m2} was proved in \cite{MS}.
When $G$ has rank one  and $\Ga<G$ is convex cocompact, 
every limit point is horospherical and Winter's mixing theorem \cite{Wi}
implies the $N$-minimality of $\cal E_0$. 
\medskip

\subsection*{Directional horospherical limit points} 
We also consider the following seemingly much stronger notion: \begin{definition} For $u\in \fa^+$,
	 a point $ \xi \in \cal F$ is called $ u $-horospherical 
	if  there exists  a sequence $ \gamma_j \in \Gamma $ such that
 $\sup_j \|  \beta_\xi(o,\gamma_jo)- \br_+ u\|<\infty$ and
 $ \beta_\xi(o,\gamma_jo)\to \infty$ as $j\to \infty$.
	\end{definition}
 Denote by $ \Lambda_h(u) $ the set of $ u $-horospherical limit points. Surprisingly, it turns out that  every horospherical limit point is $u$-horospherical {\it{for all}} $u\in \inte\L$:
\begin{theorem}
    For all $u\in \inte \L$, we have $$\Lambda_h=\Lambda_h(u).$$
\end{theorem}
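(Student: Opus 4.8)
The plan is to prove the two inclusions separately, the inclusion $\La_h(u)\subseteq\La_h$ being the routine one. If $\xi$ is $u$-horospherical, witnessed by $\gamma_j\in\Ga$ with $\sup_j\|\beta_\xi(o,\gamma_jo)-\br_+u\|<\infty$ and $\beta_\xi(o,\gamma_jo)\to\infty$, then since $u\in\inte\L$ is an interior direction the vectors $\beta_\xi(o,\gamma_jo)$ eventually lie in a fixed closed cone $\cal C\subset\inte\L\cup\{0\}$ (a bounded thickening of $\br_+u$ at large norm), so $\xi\in\La_h$. The content is the reverse inclusion $\La_h\subseteq\La_h(u)$, which I would prove for each fixed $u\in\inte\L$. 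Fix $\xi\in\La_h$ and $\gamma_j\in\Ga$ with $v_j:=\beta_\xi(o,\gamma_jo)\in\cal C_0\subset\inte\L\cup\{0\}$ and $\|v_j\|\to\infty$. Set $\eta_j:=\gamma_j^{-1}\xi$; since $\La$ is $\Ga$-invariant, $\eta_j\in\La$, and after passing to a subsequence I may assume $\eta_j\to\eta_\infty\in\La$ and $v_j/\|v_j\|\to w\in\inte\L$.

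The mechanism for redirecting the Busemann values rests on the cocycle identity
\[
\beta_\xi(o,\gamma_j g)=v_j+\beta_{\eta_j}(o,g),\qquad g\in G,
\]
together with the following quantitative behaviour along a loxodromic axis. If $h\in\Ga$ is loxodromic with attracting fixed point $y_h$ and repelling fixed point $y_{h^{-1}}$, and $\eta\in\F$ is transverse to $y_{h^{-1}}$, then there is a turning time $n^\ast=n^\ast(\eta)$, growing like the logarithm of the inverse distance from $\eta$ to $y_h$ and hence tending to $\infty$ as $\eta\to y_h$, such that
\[
\beta_\eta(o,h^no)=n\,\lambda(h)+O(1)\qquad\text{for }0\le n\le n^\ast ,
\]
with uniform implied constant. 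The point is that while $h^no$ races toward $y_h$ and $\eta$ is still unresolved from $y_h$ by $h^n$, the Busemann value toward $\eta$ grows at the full rate $\lambda(h)$; only past $n^\ast$ does it reverse and drift to $-\infty$. This is exactly what lets me add to $v_j$ a large vector in the direction of $\lambda(h)$ while keeping $\beta_\xi(o,\cdot)$ growing to $+\infty$, and it is where the hypothesis $\xi\in\La_h$ (rather than merely $\xi\in\La$) enters: only the attracting direction produces genuine growth.

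To finish I would exploit the abundance of loxodromic elements from Zariski density. By Benoist's theorem the Jordan projections $\lambda(\Ga)$ realize a dense set of directions in $\inte\L$ and the attracting fixed points are dense in $\La$; moreover one can prescribe an approximate Jordan direction while independently placing the attracting fixed point (e.g. by conjugating a fixed loxodromic element by suitable $\phi\in\Ga$) arbitrarily close to any given point of $\La$. I choose loxodromic $h_1,\dots,h_m\in\Ga$, $m=\dim\fa$, whose directions surround $u$, i.e. $u$ lies in the interior of the cone spanned by $\lambda(h_1),\dots,\lambda(h_m)$; equivalently, writing $P_{u^\perp}$ for orthogonal projection onto $u^\perp$ in $\fa$, the vectors $P_{u^\perp}\lambda(h_i)$ positively span $u^\perp$. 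Setting $\delta_j=\gamma_j h_1^{n_1}\cdots h_m^{n_m}$ and iterating the cocycle identity yields
\[
\beta_\xi(o,\delta_jo)=v_j+\textstyle\sum_{i=1}^m n_i\,\lambda(h_i)+O(1),
\]
valid as long as each $n_i\le n_i^\ast$ and the intermediate boundary points $\eta_j,\,h_1^{-n_1}\eta_j,\dots$ remain transverse to the relevant fixed points — conditions arranged by placing the $y_{h_i}$ near $\eta_\infty$ in general position and ordering the growth of the exponents. I then solve $\sum_i n_iP_{u^\perp}\lambda(h_i)=-P_{u^\perp}v_j$ for nonnegative $n_i$ (possible by positive spanning), and choose the $y_{h_i}$ close enough that the required $n_i$ satisfy $n_i\le n_i^\ast$. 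The resulting $\beta_\xi(o,\delta_jo)$ then has perpendicular component $O(1)$ and $u$-component tending to $+\infty$, so $\xi\in\La_h(u)$.

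I expect the main obstacle to be the quantitative Busemann estimate near the attracting fixed point — establishing linear growth with a uniform error up to the turning time $n^\ast$ and the divergence $n^\ast\to\infty$ as $\eta\to y_h$ — and, in higher rank, propagating this control through the product $h_1^{n_1}\cdots h_m^{n_m}$ while keeping all intermediate flags transverse and correctly ordering the growth of the exponents. The case $\dim\fa=1$ is trivial, since then there is only one direction in $\inte\L$. The first nontrivial case is $\dim\fa=2$, where $u^\perp$ is one-dimensional, a single loxodromic $h$ (with direction on the opposite side of $u$ from $w$) suffices, and the system collapses to the scalar equation $n\,P_{u^\perp}\lambda(h)=-P_{u^\perp}v_j$; this case already contains the essential idea.
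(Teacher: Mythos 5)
Your overall architecture is the same as the paper's: append to $\gamma_j$ large powers of loxodromic elements whose Jordan projections are chosen (via Benoist's density of $\br_+\lambda(\Ga)$ in $\L$) to steer the Busemann vector toward $\br_+u$, expand $\beta_\xi(o,\gamma_j h_1^{n_1}\cdots o)$ by the cocycle identity, and solve a positive linear system for the exponents. But your key dynamical lemma is the wrong one, and the scheme built on it breaks. You propose to track the \emph{attracting} dynamics: $\beta_\eta(o,h^no)=n\lambda(h)+O(1)$ valid only for $n\le n^\ast(\eta)\asymp \log \bigl(d(\eta,y_h)^{-1}\bigr)$. Since $u\notin\br_+w$ in the main case, solving $\sum_i n_iP_{u^\perp}\lambda(h_i)=-P_{u^\perp}v_j$ forces $\max_i n_i\asymp\|v_j\|\to\infty$. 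Meanwhile the boundary point relevant to the first factor is $\eta_j=\gamma_j^{-1}\xi$, which converges to a \emph{fixed} point $\eta_\infty$; for any preselected $h_1$ with $y_{h_1}\ne\eta_\infty$ the turning times $n^\ast_1(\eta_j)$ stay bounded, and an easy induction shows the same for every subsequent factor, so the required exponents can never fit under the turning times. Letting $h_i$ depend on $j$ (conjugating so that $y_{h_i}\to\eta_\infty$) destroys the uniformity of your $O(1)$, which degrades with the conjugator. A second warning sign: your linear system $su=v_j+\sum n_i\lambda(h_i)$ never uses that $w=\lim v_j/\|v_j\|$ lies in $\inte\L$, so if the argument worked it would apply verbatim to $y_\gamma$ for $\gamma$ with $\lambda(\gamma)\in\partial\L$, contradicting Proposition 1.6 of the paper.

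The paper's Proposition \ref{prop_horo_to_tau_horo} avoids this by using the \emph{transverse} estimate instead: for $\eta$ ranging in a compact subset of $\{\eta:(y_{g},\eta)\in\F^{(2)}\}$ — an open, perturbation-stable condition that survives both $\eta_j\to\eta_\infty$ and $k\to\infty$ — the shadow lemma (Lemma \ref{sh}) gives $\beta_\eta(g^{k}o,o)=\mu(g^{-k})+O(1)=k\lambda(g^{-1})+O(1)$ for \emph{all} $k$, with no turning time. The correction to $v_j$ therefore has the opposite sign, $-k_\ell\lambda(g_\ell^{-1})=-k_\ell w_\ell$, and the linear algebra becomes the decomposition $v_0=c_0u+\sum_\ell c_\ell w_\ell$ with $w_\ell\in\lambda(\Ga)\cap\inte\L$, i.e.\ $v_0\in\inte\operatorname{cone}(u,w_1,\dots,w_r)$ — which is exactly where $v_0\in\inte\L$ is used. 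The nesting of neighborhoods $g_\ell^{-k}U_\ell\subset U_{\ell+1}$ (built from Lemma \ref{lemma_attracting_fixed_pt}) replaces your "ordering of exponents." Finally, note that the case $u\in\br_+v_0$ genuinely needs the separate two-step swap the paper performs at the end; your sketch does not address it.
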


\subsection*{Existence of non-dense horospheres}
A finitely generated subgroup $\Ga<G$ is called an Anosov subgroup  (with respect to $P$) if there exists $C>0$ such that
for all $\ga\in \Ga$, $\alpha(\mu(\ga))\ge C|\ga| -C$ for all simple roots
$\alpha$ of $(\frak g, \fa^+)$,
where $\mu(\ga)\in \fa^+$ denotes the Cartan projection of $\ga$ and $|\ga|$
is the word length of $\ga$ with respect to a fixed finite generating set of $\Ga$. 

For Zariski dense Anosov subgroups of $G$, almost all $NM$-orbits are dense in $\cal E$ 
and almost all $N$-orbits are dense in $\cal E_0$ with respect to any Patterson-Sullivan measure on $\La$ (\cite{LO1}, \cite{LO2}). In particular,
the set of all horospherical limit points has full Patterson-Sullivan measures.

On the other hand, as Anosov subgroups are regarded as higher rank generalizations of convex cocompact subgroups, it is a natural question whether the minimality of the $NM$-action persists in the higher rank setting. It turns out that it is not the case. Our example is based on Thurston's theorem \cite[Theorem 10.7]{Th} together with the following observation on the implication of the existence of a Jordan projection of an element of $\Ga$ lying in the boundary $\partial \L$ of the limit cone.

 \begin{prop}\label{thp}  Let $\Ga<G$ be a Zariski dense discrete subgroup. For any loxodromic element $\ga\in \Ga$,
 we have
 $$\text{  $\lambda(\gamma)\in  \inte \L \quad $  if and only if  $\quad \{y_{\ga}, y_{\ga^{-1}}\} \subset \La_h $}$$
 where $y_\ga$ and $y_{\ga^{-1}}$ denote the attracting fixed points of $\ga$ and $\ga^{-1}$ respectively. 

In particular, if $\lambda(\Ga)\cap \partial \L\ne \emptyset$,
     then $\La\ne \La_h$ and hence there exists a non-dense $NM$-orbit in $\cal E$.
    \end{prop}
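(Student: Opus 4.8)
The plan is to establish the two implications of the biconditional separately and then deduce the final assertion from Theorem \ref{m1}. The forward implication is soft: suppose $\lambda(\ga)\in\inte\L$. Since $\lambda(\ga^{-1})=\i(\lambda(\ga))$ and the limit cone is invariant under the opposition involution (because $\Ga=\Ga^{-1}$ forces $\lambda(\Ga)=\i(\lambda(\Ga))$, hence $\L=\i(\L)$), we also have $\lambda(\ga^{-1})\in\i(\inte\L)=\inte\L$. Applying Lemma \ref{hor} to $\ga$ and to $\ga^{-1}$ yields $y_\ga\in\La_h$ and $y_{\ga^{-1}}\in\La_h$.

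The substance is the reverse implication, which I would prove in contrapositive form: if $\lambda(\ga)\in\partial\L$ then $y_\ga$ and $y_{\ga^{-1}}$ are not both horospherical. Assume for contradiction that both lie in $\La_h$. The engine is the theorem that $\La_h=\La_h(u)$ for every $u\in\inte\L$, which lets me choose, for a direction $v\in\inte\L$ to be fixed later, horospherical sequences $\ga_j$ at $y_\ga$ and $\delta_j$ at $y_{\ga^{-1}}$ whose Busemann vectors stay within bounded distance of prescribed interior rays and tend to infinity. Two standard inputs then apply: a regularity estimate shows that a horospherical approach to $y_\ga$ forces $\ga_j^{-1}o\to y_\ga$ (equivalently, the repelling flag of $\ga_j$ converges to $y_\ga$, exponentially fast in $\|\mu(\ga_j)\|$), and symmetrically the repelling flag of $\delta_j$ converges to $y_{\ga^{-1}}$; and the Cartan projection is additive up to $O(1)$ along products with transverse inner flags. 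Forming $h_j:=\ga_j^{-1}\delta_j$, I would verify that $h_j$ is loxodromic for large $j$, with attracting and repelling flags converging to the transverse pair $(y_\ga,y_{\ga^{-1}})$ and $\lambda(h_j)\approx\mu(h_j)\approx\i(\mu(\ga_j))+\mu(\delta_j)$; by letting the $\delta_j$-term dominate while the $\ga_j$-term pins the attracting flag to $y_\ga$, I can steer $\lambda(h_j)$ into any prescribed $v\in\inte\L$ with the flats pinned to $(y_\ga,y_{\ga^{-1}})$.

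Now $h_j$ shares its axis flat, to high accuracy, with $\ga$; since loxodromic elements with a common transverse pair of fixed flags lie in a common torus up to the compact factor $M$ (and $M$ does not affect the Jordan projection), I expect $\lambda(\ga^{-m}h_j)\approx\lambda(h_j)-m\,\lambda(\ga)$ over a suitable range of $m$. Because $\lambda(\ga)\in\partial\L$ while $\L$ is a pointed cone and $\lambda(\ga)\in\inte\fa^+$ (as $\ga$ is loxodromic), the ray $t\mapsto\lambda(h_j)-t\,\lambda(\ga)$ issuing from the interior point $\lambda(h_j)$ in the outward direction $-\lambda(\ga)$ must cross $\partial\L$ at an interior point of $\fa^+$; choosing $v$ close to $\lambda(\ga)$ makes this crossing occur at $m=m_1$ with $m_1$ small compared to $\|\lambda(h_j)\|$. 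For such $m$ one gets $\lambda(\ga^{-m}h_j)\in\fa^+\setminus\L$, contradicting $\lambda(\ga^{-m}h_j)\in\lambda(\Ga)\subset\L$. The main obstacle is exactly the approximation $\lambda(\ga^{-m}h_j)\approx\lambda(h_j)-m\,\lambda(\ga)$: the fixed flags of $h_j$ only approximate $y_\ga,y_{\ga^{-1}}$, and each factor $\ga^{-1}$ acts with expansion near $y_\ga$, so the argument requires winning a quantitative race between the exponential closeness of the flags (of order $e^{-c\,\|\mu(\ga_j)\|}$) and the exponential error growth over $m\sim\|\lambda(h_j)\|$ subtractions. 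I expect to win it by exploiting the freedom in $v$ afforded by $\La_h=\La_h(u)$: taking $v$ both highly regular and near $\lambda(\ga)$ keeps $m_1$ a small multiple of $\|\lambda(h_j)\|$ while keeping the contraction exponent favorable, via uniform hyperbolicity (ping-pong) estimates for the pair $\{\ga,h_j\}$ near $\{y_\ga,y_{\ga^{-1}}\}$.

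For the concluding statement, suppose $\lambda(\Ga)\cap\partial\L\ne\emptyset$ and pick a loxodromic $\ga\in\Ga$ with $\lambda(\ga)\in\partial\L$. By the proposition at least one of $y_\ga,y_{\ga^{-1}}$ is not horospherical, so $\La_h\subsetneq\La$; choosing $g$ with $gP$ equal to this non-horospherical point, the equivalence (1)$\Leftrightarrow$(2) of Theorem \ref{m1} shows that $[g]NM$ is not dense in $\cal E$.
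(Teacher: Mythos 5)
Your forward implication and your deduction of the final assertion from Theorem \ref{m1} are correct and coincide with the paper's (the former is Lemma \ref{hor} applied to $\ga$ and $\ga^{-1}$, using the $\op{i}$-invariance of $\L$). The reverse implication, however, contains a genuine gap --- in fact two. First, your opening ``standard input'', that a horospherical approach to $y_\ga$ forces $\ga_j^{-1}o\to y_\ga$ (equivalently, that the relevant flag of $\ga_j$ converges to $y_\ga$), is exactly what fails in higher rank: the paper's Remark in the introduction points out that a sequence $\ga_j(o)\in\cal H_\xi(T_j)$ with $T_j\to\infty$ need \emph{not} converge to $\xi$, because the condition that $\beta_\xi(o,\ga_j o)$ lies deep in a cone of $\inte\L$ does not pin down $\kappa_1(\ga_j)$ or $\kappa_2(\ga_j)$. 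Without that convergence, the element $h_j=\ga_j^{-1}\delta_j$ has no reason to be loxodromic with attracting/repelling flags near $(y_\ga,y_{\ga^{-1}})$, and the whole construction does not get off the ground. Second, even granting the flag convergence, the estimate $\lambda(\ga^{-m}h_j)\approx\lambda(h_j)-m\lambda(\ga)$ uniformly over $m$ comparable to $\|\lambda(h_j)\|$ is the step you yourself flag as ``the main obstacle'' and only ``expect to win'': the error in the Jordan projection of a product of loxodromic elements with merely $\e$-close (rather than equal) axes degrades with the number of factors, and no quantitative mechanism is supplied to beat it. As written this is a plan with an acknowledged unresolved core, not a proof.

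The paper proves the hard direction by an entirely different and softer route. It first shows (Proposition \ref{lah}(2), whose proof applies Theorem \ref{m1} to both $[g]NM$ and $[g]w_0\check{N}M$ for $\ga=gma_{\lambda(\ga)}g^{-1}$) that if $\{y_\ga,y_{\ga^{-1}}\}\subset\La_h$ then the one-parameter flow $\exp(\br\,\lambda(\ga))$ is topologically transitive on $\Omega/M$, and then invokes the theorem of Dang--Glorieux \cite[Proposition 4.7]{DG} that no flow in a direction of $\partial\L\cap\inte\fa^+$ is topologically transitive there; hence $\lambda(\ga)\in\inte\L$. Salvaging your direct approach would essentially amount to reproving that non-transitivity statement by hand; I recommend instead quoting it, as the paper does.
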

     
Thurston's work \cite{Th} provides many examples of Anosov subgroups satisfying that $\lambda(\Ga)\cap \partial \L\ne \emptyset$. To describe them,
let $\Sigma$ be a a torsion-free cocompact lattice of $\PSL_2(\br)$ and let $\pi:\Sigma\to \PSL_2(\br)$ be a discrete faithful representation. Let $0<d_-(\pi) \le d_+ (\pi) <\infty $ be the minimal and maximal geodesic stretching constants:
 \begin{equation}\label{dd} d_+(\pi) =\sup_{\sigma\in \Sigma-\{e\} }\tfrac {\ell(\pi(\sigma))}{\ell(\sigma)} \quad \mbox{and} \quad d_-(\pi) =\inf_{\sigma\in \Sigma-\{e\}}\tfrac{\ell(\pi(\sigma))}{\ell(\sigma)}
 \end{equation} 
where
$\ell(\sigma)$ denotes the length of the closed geodesic in the hyperbolic manifold $\Sigma\ba \bH^2$ corresponding to $\sigma$
and $\ell (\pi(\sigma))$ is defined similarly.

Consider the following self-joining subgroup $$\Ga_\pi:=(\text{id}\times \pi)(\Sigma)=\{(\sigma, \pi(\sigma)):\sigma\in \Sigma\} <\PSL_2(\br) \times \PSL_2(\br).$$
It is easy to see that $\Ga$ is an Anosov subgroup of $G=\PSL_2(\br) \times \PSL_2(\br)$. Moreover when $\pi$ is
not a conjugate by a M\"obius tranformation,
$\Ga_\pi$ is Zariski dense in $G$ (cf. \cite[Lemma 4.1]{KO}). Identifying $\fa=\br^2$, 
the Jordan projection $\lambda(\gamma_\pi)$ of  $\gamma_\pi=(\sigma, \pi(\sigma))\in \Ga_\pi$
is given by $(\ell(\sigma), \ell(\pi(\sigma)))\in \br^2$. Hence
the limit cone $\L$ of $\Gamma_\pi$ is given by
$$\cal L:=\{(v_1,v_2)\in \br_{\ge 0}^2: d_-(\pi) v_1\le v_2\le d_+(\pi) v_1\}. $$

Thurston \cite[Theorem 10.7]{Th} showed
    that  $d_+(\pi)$ is realized by
    a simple closed geodesic of $\Sigma\ba \bH^2$ in {\it most of cases}, which hence provides infinitely many examples of $\Ga_\pi$ which satisfy $\lambda(\Ga_\pi)\cap \partial \L\ne \emptyset$.
Therefore Proposition \ref{thp} implies (in this case, we have $NM=N)$:
\begin{cor}\label{th2} There are infinitely many non-conjuagte Zariski dense Anosov subgroups $\Ga_{\pi}<\PSL_2(\br) \times \PSL_2(\br)$ with non-dense $NM$-orbits in $\cal E$.
    \end{cor}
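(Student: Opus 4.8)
The plan is to combine the explicit limit cone of $\Ga_\pi$ with Proposition \ref{thp}, using Thurston's theorem to supply infinitely many representations. Fix a torsion-free cocompact lattice $\Sigma<\PSL_2(\br)$ and let $\pi$ be a discrete faithful representation that is not a Möbius conjugate of the inclusion, so that $\Ga_\pi$ is Zariski dense and Anosov as recalled above. Since $\Ga_\pi$ is Zariski dense, $\inte\L\ne\emptyset$ by \cite{Benoist}, and as $\L=\{(v_1,v_2)\in\br_{\ge0}^2: d_-(\pi)v_1\le v_2\le d_+(\pi)v_1\}$ this forces $d_-(\pi)<d_+(\pi)$; in particular the ray $\{v_2=d_+(\pi)v_1\}$ is a genuine extremal ray of $\partial\L$.

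The one new point is the passage from the realization of $d_+(\pi)$ to a boundary Jordan projection. Suppose $d_+(\pi)=\ell(\pi(\sigma_0))/\ell(\sigma_0)$ for some $\sigma_0\in\Sigma\setminus\{e\}$. Then $\ga_0=(\sigma_0,\pi(\sigma_0))\in\Ga_\pi$ is loxodromic with Jordan projection
\[
\lambda(\ga_0)=(\ell(\sigma_0),\ell(\pi(\sigma_0)))=(\ell(\sigma_0),\,d_+(\pi)\,\ell(\sigma_0)),
\]
which lies on the extremal ray $\{v_2=d_+(\pi)v_1\}$, hence $\lambda(\Ga_\pi)\cap\partial\L\ne\emptyset$. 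Proposition \ref{thp} then applies at once and yields $\La\ne\La_h$, so there is a non-dense $NM$-orbit in $\cal E$; since $M$ is trivial for $\PSL_2(\br)$ we have $NM=N$, giving the asserted orbit. By Thurston's theorem \cite[Theorem 10.7]{Th} this realization property holds for infinitely many representations $\pi$, so infinitely many $\Ga_\pi$ carry a non-dense $NM$-orbit.

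It remains to extract infinitely many that are pairwise non-conjugate, which I expect to be the main obstacle. The key is that the Jordan projection is a conjugation invariant of elements of $G$, and inner conjugation in $\PSL_2(\br)\times\PSL_2(\br)$ does not interchange the two factors; hence the limit cone $\L$—and with it the pair $(d_-(\pi),d_+(\pi))$—is a genuine conjugacy invariant of $\Ga_\pi$. Thus it suffices to choose, among Thurston's representations, a sequence $\pi_n$ with pairwise distinct values of $d_+(\pi_n)$: the groups $\Ga_{\pi_n}$ then have distinct limit cones and are pairwise non-conjugate. The delicate step is to verify that $d_+$ genuinely varies within the locus where Thurston's theorem applies; this uses that $d_\pm$ depend continuously and non-constantly on the hyperbolic structure $\pi$ while the simple-geodesic realization of $d_+$ persists under small deformations, so that one may select infinitely many $\pi_n$ with distinct cones, all remaining Zariski dense.
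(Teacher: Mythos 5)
Your argument is correct and takes essentially the same route as the paper: Thurston's theorem supplies an element $\sigma_0$ realizing $d_+(\pi)$, the Jordan projection $\lambda\bigl((\sigma_0,\pi(\sigma_0))\bigr)$ then lies on the extremal ray of $\partial\L$, and Proposition \ref{thp} produces the non-dense $NM$-orbit (with $NM=N$ since $M$ is trivial here). The paper leaves the non-conjugacy of the examples implicit; your observation that the limit cone, hence the pair $(d_-(\pi),d_+(\pi))$, is a conjugation invariant, together with choosing $\pi_n$ with distinct $d_+$-values among Thurston's examples, is the natural way to make that point explicit and matches the paper's intent.
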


We close the introduction by the following question (cf.~\cite{Kim},\cite{Quint}):
\begin{Q} For a simple real algebraic group $G$ with
$\op{rank} G\ge 2$,
is every discrete subgroup $\Ga<G$ with $\La=\La_h=\F$ 
necessarily a cocompact lattice in $G$?
\end{Q}

\subsection*{Acknowledgments} We would like to thank Dick Canary and Pratyush Sarkar for helpful conversations regarding Corollary \ref{th2}. The first named author would also like to thank Subhadip Dey and Ido Grayevsky for helpful and enjoyable discussions. We thank the anonymous referee for pointing out to us the paper \cite{Conze-Guivarch}.

\section{Preliminaries}\label{Section_preliminaries}

Let $G$ be a connected, semisimple real algebraic group.   We fix, once and for all, a Cartan involution $\theta$ of the Lie algebra $\mathfrak{g}$ of $G$, and decompose $\fg$ as $\frak g=\frak k\oplus\mathfrak{p}$, where $\fk$ and $\fp$ are the $+ 1$ and $-1$ eigenspaces of $\theta$, respectively. 
	We denote by $K$ the maximal compact subgroup of $G$ with Lie algebra $\fk$,

	Choose a maximal abelian subalgebra $\fa$ of $\frak p$. Choosing a closed positive Weyl chamber $\fa^+$ of $\fa$, let $A:=\exp \frak a$ and $A^+=\exp \frak a^+$. 
	The centralizer of $A$ in $K$ is denoted by $M$, and we set 
	$N$ to be the maximal contracting horospherical subgroup: for  $a\in \inte A^+$,
	$$N =\{g\in G: a^{-n} g a^n\to e\text{ as $n\to +\infty$}\}.$$
	We set $P=MAN$, which is the unique minimal parabolic subgroup of $G$, up to conjugation.
	
		For $u\in \fa$, we write $a_u=\exp u\in A$.
	We denote by $\|\cdot\|$ the norm on $\frak g$ induced by the Killing form.
	Consider the Riemannian symmetric space $X:=G/K$ with the metric induced from the norm $\|\cdot \|$ on $\frak g$ and $o=K\in X$.
	
Let  $\Furst=G/P$ denote the Furstenberg boundary.
	Since $K$ acts transitively on $ \Furst $ and $K\cap P=M$, we may identify $\Furst=K/M$. We denote by $\F^{(2)}$ the unique open $G$-orbit in $\Furst \times \Furst$.

	Denote by $w_0\in K$  the unique element in the Weyl group
	such that $\op{Ad}_{w_0}\fa^+= -\fa^+$; it is the longest Weyl element. We then have  $\check{P}:=w_0 P w_0^{-1}$ is an opposite parabolic subgroup of $G$, with $ \check{N} $ its unipotent radical.
 The map $\op{i}=-\op{Ad}_{w_0}: \fa^+\to \fa^+$ is called
	the opposition involution.

	For $g\in G$, we consider the following visual maps
	$$g^+:=gP\in \F\quad\text{ and }\quad g^-:=gw_0P\in \F.$$
	Then $\F^{(2)}=\{(g^+, g^-)\in \F\times \F: g\in G\}$.
	
	Any element $ g \in G $ can be uniquely decomposed as the commuting product $ g_h,g_e,g_u $, where $ g_h $, $ g_e $, and $ g_u $  are hyperbolic, elliptic and unipotent elements respectively. The Jordan projection of $g$ is defined as the element $\lambda(g) \in \fa^+ $ satisfying $ g_h= \varphi \exp \lambda(g)  \varphi^{-1} $ for some $\varphi\in G$. 
	
	An element $g\in G$ is called loxodromic if $ \lambda(g) \in \inte \fa^+ $; in this case, $g_u$ is necessarily trivial.
For a loxodromic element $g\in G$, the point $\varphi^+ \in \Furst$ is called the attracting fixed point of $g$, which we denote by $y_g$.
	For any loxodromic element $g\in G$ and $\xi\in \F$
	with $(\xi, y_{g^{-1}})\in \F^{(2)}$, we have
	$\lim_{k\to \infty} g^k \xi = y_{g}$ and the convergence is uniform on compact subsets.
	
	Note that for any loxodromic element $g\in G$,
	$$\lambda(g^{-1})= \op{i}\lambda(g).$$
	Let $\Ga<G$ be a Zariski dense discrete subgroup of $G$.
	The limit cone $\cal L=\L_\Ga$ of $\Ga$ is the smallest closed cone of $\fa^+$ containing $\lambda(\Ga)$. It is a convex cone with non-empty interior \cite{Benoist}.
	
	We will use the following simple lemma.
	\begin{lemma}\label{lemma_attracting_fixed_pt}
	For any $ v \in \lambda(\Gamma) $ and $ \zeta \in \Furst $, there exists a loxodromic element $ \gamma \in \Gamma $ with $ \lambda(\gamma)=v $ and a neighborhood $ U $ of $ \zeta $ in $ \Furst $ such that
	$ \{y_\ga \}\times U $ is a relatively compact subset of
	$\Furst^{(2)} $ and as $k\to \infty$,
		\[ \gamma^{-k} \zeta \to y_{\ga^{-1}} \quad \text{uniformly on $U$}. \]
			\end{lemma}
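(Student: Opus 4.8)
The plan is to use the Zariski density of $\Ga$ to replace a prescribed loxodromic element by a conjugate whose attracting fixed point is in general position with $\zeta$, and then to read off the convergence from the attracting dynamics recalled just before the lemma. Since $v\in\lambda(\Ga)$, fix once and for all a loxodromic $\ga_0\in\Ga$ with $\lambda(\ga_0)=v$ and attracting fixed point $y_{\ga_0}\in\F$. For any $h\in\Ga$ the conjugate $h\ga_0h^{-1}$ is again loxodromic with the same Jordan projection $\lambda(h\ga_0h^{-1})=\lambda(\ga_0)=v$, and its attracting fixed point is $h\cdot y_{\ga_0}$. Thus it suffices to produce $h\in\Ga$ with $(\zeta, h\cdot y_{\ga_0})\in\F^{(2)}$ and then set $\ga=h\ga_0h^{-1}$, so that automatically $\lambda(\ga)=v$ and $(\zeta,y_\ga)\in\F^{(2)}$.

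To find such an $h$ I would argue by Zariski density. The orbit map $G\to\F$, $g\mapsto g\cdot y_{\ga_0}$, is a surjective morphism of varieties; hence the image $\Ga\cdot y_{\ga_0}$ of the Zariski dense subset $\Ga$ is Zariski dense in $\F$. On the other hand, the slice $\{\eta\in\F:(\zeta,\eta)\in\F^{(2)}\}$ is nonempty --- writing $\zeta=gP$, the pair $(\zeta,gw_0P)=(g^+,g^-)$ lies in $\F^{(2)}$ --- and Zariski open, since $\F^{(2)}$ is the open $G$-orbit in $\F\times\F$; being a nonempty open subset of the irreducible variety $\F$, it is Zariski dense. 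A Zariski dense set cannot be contained in the complement of a nonempty Zariski open set, so some $h\cdot y_{\ga_0}$ lies in this slice, which is exactly the desired general-position condition. This is the one substantive step; it is precisely where Zariski density is needed, namely to realize the transversality $(\zeta,y_\ga)\in\F^{(2)}$ inside $\Ga$ while keeping $\lambda(\ga)=v$ fixed.

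With $\ga$ so chosen, the condition $(\zeta,y_\ga)\in\F^{(2)}$ places $\zeta$ in the open set $\{\eta:(\eta,y_\ga)\in\F^{(2)}\}$, so I can pick a relatively compact neighborhood $U$ of $\zeta$ with $(\eta,y_\ga)\in\F^{(2)}$ for all $\eta\in\ov U$. Using that $\F^{(2)}$ is symmetric under the coordinate flip (which follows from $w_0^2\in M$, giving $(g^-,g^+)=((gw_0)^+,(gw_0)^-)$), this also yields $\{y_\ga\}\times\ov U\subset\F^{(2)}$, establishing the asserted relative compactness of $\{y_\ga\}\times U$. Finally I apply the recalled attracting dynamics to the loxodromic element $\ga^{-1}$, for which $y_{(\ga^{-1})^{-1}}=y_\ga$: every $\eta$ with $(\eta,y_\ga)\in\F^{(2)}$ satisfies $\ga^{-k}\eta\to y_{\ga^{-1}}$, uniformly on compact subsets of $\{\eta:(\eta,y_\ga)\in\F^{(2)}\}$; since $\ov U$ is such a compact subset, the convergence is uniform on $U$. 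I expect no genuine obstacle beyond the Zariski-density step, the remainder being openness of $\F^{(2)}$ together with the dynamical statement already established.
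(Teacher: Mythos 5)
Your proof is correct and follows essentially the same route as the paper's: fix a loxodromic element with Jordan projection $v$, conjugate it by an element of $\Gamma$ (found via Zariski density and the Zariski-openness of $\F^{(2)}$) so that its attracting fixed point is in general position with $\zeta$, and then invoke the uniform contraction recalled before the lemma. Your justification of the conjugating element via denseness of the orbit $\Gamma\cdot y_{\gamma_0}$ in $\F$ is a slightly more explicit version of the same Zariski-density step the paper uses.
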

		\begin{proof}
Let $\zeta\in \F$.	Choose $\ga_1\in \Ga$ such that $\lambda(\ga_1)=v$. 
Since the set of all loxodromic elements of $\G$ is Zariski dense in $G$ \cite{Benoist_II} and $\F^{(2)}$ is Zariski open in $\F\times \F$,
there exists $\ga_2\in \Ga$ such that $(\zeta, \ga_2 y_{\ga_1})\in \F^{(2)}$. Let $\ga=\ga_2 \ga_1\ga_2^{-1}$, so that
$y_{\ga}=\ga_2y_{\ga_1}$. It now suffices to take any  neighborhood
$U$ of $\zeta$ such that $U\times\{ \ga_2y_{\ga_1}\}$ is a relatively compact subset of  $\F^{(2)}$.\end{proof}

\subsection*{Convergence of a sequence in $X$ to $\F$}
By the Cartan decomposition $G=KA^+K$,
for $g\in G$, we  may write 
$$g=\kappa_1(g)\exp (\mu(g))\kappa_2(g)\in KA^+K$$ where $\mu(g)\in \fa^+$, called the Cartan projection of $g$, is uniquely determined, and $\kappa_1(g), \kappa_2(g)\in K$. If $\mu(g)\in \inte \fa^+$, then $[\kappa_1(g)]\in K/M=\cal F$ is uniquely determined.

Let $\Pi$ be the set of simple roots for $(\frak g, \frak a)$. For a sequence $g_i\to G$, we say $g_i\to \infty$ regularly
if $\alpha(\mu (g_i))\to \infty$ for all $\alpha\in \Pi$. Note that if $g_i\to \infty$ regularly,
then for all sufficiently large $i$,
$\mu(g_i)\in \inte\fa^+$  and hence $[\kappa_1(g_i)]$ is well-defined.

\begin{Def}\label{conver} 
 A sequence $p_i \in X$ is said to converge to $\xi\in \cal F$ if 
 there exists $g_i\to \infty$ regularly in $G$ with $p_i=g_i(o)$
 and $\lim_{i\to\infty}[\kappa_1(g_i)]=\xi$.
\end{Def}

\subsection*{$P^\circ$-minimal subsets} 
We denote by $\La\subset \F$ the limit set of $\G$, which is defined as
\be\label{ls} \La=\{\lim \ga_i(o): \ga_i\in \Ga\}.\ee 
For a non-Zariski dense subgroup,
$\La$ may be an empty set. For $\Ga<G$ Zariski dense, this is the unique $\Ga$-minimal subset of $\F$ (\cite{Benoist}, \cite{LO1}).

It follows that  the following set $\E$ is the unique $P$-minimal subset of $\Ga\ba G$:
	$$\E=\{[g]\in \Ga\ba G: g^+\in \La\}.$$
	
	Let $P^\circ$ denote the identity component of $P$.
	Then $\E$ is a disjoint union of at most $[P: \pc]$-number of $P^\circ$-minimal subsets. We fix one $\pc$-minimal subset $\E_0$ once and for all. Note that any $\pc$-minimal subset is then of the form $\E_0 m$ for some $m\in M$.
	We set
	\be\label{ommm} \Omega:=\{[g]\in \Ga\ba G: g^+, g^-\in \La\}\quad\text{and}\quad
	\Omega_0:=\Omega\cap \E_0 .\ee

\subsection*{Busemann map} The Iwasawa cocycle $\sigma: G\times \Furst \to \fa$ is defined as follows: for $(g, \xi)\in G\times \Furst $ with $\xi=[k]$ for $k\in K$,
		$\exp \sigma(g,\xi)$ is the $A$-component of $g k$ in the $KAN$ decomposition, that is,
		$$gk\in K \exp (\sigma(g, \xi)) N.$$
		The $\fa$-valued Busemann function $\beta: \Furst \times X \times X \to\fa $ is defined as follows: for $\xi\in \Furst $ and $g, h\in G$,
			$$\beta_\xi ( ho, go):=\sigma (h^{-1}, \xi)-\sigma(g^{-1}, \xi).$$
We note that for any $g\in G$, $\xi\in \F$, and $x,y, z\in X$,
\be\label{bus} \beta_\xi(x,y)=\beta_{g\xi}(gx, gy),\quad\text{and}\quad  \beta_\xi(x, y)=\beta_\xi(x,z)+\beta_\xi(z, y).\ee 
\noindent In particular, $ \beta_\xi(o,go) \in \fa $ is defined by
\begin{equation}\label{eq_Busemann}
	g^{-1}k_\xi \in K \exp(-\beta_\xi(o, go)) N,
\end{equation}
and hence $\beta_P (o, a_u o)=u$ for any $u\in \fa$.
For $h, g\in G$, we set $\beta_\xi(h, g):=\beta_{\xi}(ho, go)$.

\medskip

\subsection*{Shadows} For $ q\in X$ and $r>0$, we set $B(q,r)=\{x\in X: d(x, q)\le r\}$.
For $p=g(o)\in X$,
the shadow of the ball $B(q,r)$ viewed from $p$ is defined as
 $$O_r(p,q):=\{(gk)^+\in \cal F: k\in K,\;
  gk\inte A^+o\cap  B(q,r)\ne \emptyset\}.$$
Similarly, for $\xi\in \F$, the shadow of the ball $B(q,r)$ as viewed from $\xi$ is
$$O_r(\xi,q):=\{h^+\in \cal F: h\in G\text{ satisfies } h^-=\xi,\, ho\in  B(q,r)  \}.$$

\begin{lem} \cite[Lemma 5.6 and 5.7]{LO1} \label{sh}
\begin{enumerate}
    \item 
There exists $\kappa>0$ such that for any $g\in G$ and $r>0$,
$$\sup_{\xi \in O_r(g(o),o)}\|\beta_{\xi}(g(o),o)-\mu(g^{-1})\|\le \kappa r .$$ 

\item If a sequence $p_i\in X$ converges to $\xi\in \cal F$, then for any $0<\e<r$,
we have 
$$O_{r-\epsilon} (p_i, o)\subset O_r(\xi, o)\subset 
O_{r+\epsilon} (p_i, o)$$ for all sufficiently large $i$.

\end{enumerate}
\end{lem}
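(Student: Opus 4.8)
For part (1) the plan is to evaluate the Busemann term exactly and then bound it against the shadow radius. From the characterisation of $\beta$ in \eqref{eq_Busemann} one has $\beta_\xi(go,o)=\sigma(g^{-1},\xi)$, and applying the equivariance \eqref{bus} with $g^{-1}$ gives the identity $\beta_\xi(ho,go)=\sigma(h^{-1}g,\,g^{-1}\xi)$; since the Iwasawa cocycle is bounded by the Cartan projection, $\|\sigma(x,\zeta)\|\le\|\mu(x)\|$, this yields the Lipschitz estimate $\|\beta_\xi(ho,go)\|\le\|\mu(h^{-1}g)\|=d(go,ho)$. Now fix $\xi=(gk)^+\in O_r(go,o)$ together with a witness $a_u\in\inte A^+$ satisfying $d(gka_uo,o)\le r$. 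Writing $h=gk$ and using $h^{-1}\xi=P$, the equivariance \eqref{bus} collapses the middle term to $\beta_\xi(gka_uo,go)=\beta_P(a_uo,o)=-u$, so the cocycle identity in \eqref{bus} gives $\beta_\xi(go,o)=u+\beta_\xi(gka_uo,o)$ and hence $\|\beta_\xi(go,o)-u\|\le d(gka_uo,o)\le r$. Finally, writing $gka_u=k'\exp(w)k''$ with $k',k''\in K$ and $\|w\|=d(gka_uo,o)\le r$, one has $g^{-1}=(ka_u)\,(k''^{-1}\exp(-w)k'^{-1})$, and the $1$-Lipschitz property of the Cartan projection, $\|\mu(A)-\mu(B)\|\le\|\mu(A^{-1}B)\|$, applied with $A=g^{-1}$, $B=ka_u$ (so $\mu(B)=u$ and $\|\mu(A^{-1}B)\|=\|\mu(gka_u)\|=\|w\|$), gives $\|\mu(g^{-1})-u\|\le r$. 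The triangle inequality then yields the bound with $\kappa=2$, uniformly in $g$ and $\xi$.

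For part (2) I would argue by compactness together with a single alignment estimate, absorbing the error into the gap between $r-\e$, $r$ and $r+\e$ via the monotonicity $O_s(\cdot,o)\subset O_{s'}(\cdot,o)$ for $s<s'$. The basic observation is that any witness realising membership in $O_s(\xi,o)$ or in $O_s(p_i,o)$ may be taken inside the compact set $\{h:d(ho,o)\le s\}$, so sequences of witnesses subconverge. The crux is the following: if $p_i=g_io\to\xi$ in the sense of Definition~\ref{conver} and $h_i=g_ik_ia_{u_i}$ stays bounded, then $(g_ik_i)^-=g_i(k_i^-)\to\xi$. I would deduce this from the standard cancellation principle: since $\mu(g_i)\to\infty$ and $\mu(k_ia_{u_i})=u_i\to\infty$ regularly while $\mu(h_i)$ is bounded, maximal cancellation in the product $\kappa_1(g_i)\exp(\mu(g_i))\,[\kappa_2(g_i)k_i]\,\exp(u_i)$ forces $\kappa_2(g_i)k_i\to w_0$ modulo $M$; consequently $k_i^-=k_iw_0P$ stays uniformly transverse to the repelling point $(\kappa_2(g_i))^{-1}w_0P$ of the map $\zeta\mapsto g_i\zeta$, and the uniform contraction of regular sequences then pushes $g_i(k_i^-)$ to $g_i^+=[\kappa_1(g_i)]\to\xi$. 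This alignment/contraction step is where the main difficulty lies, and it is exactly the regular convergence $[\kappa_1(g_i)]\to\xi$ that drives it.

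Granting the alignment estimate, both inclusions follow by a limiting argument. For $O_{r-\e}(p_i,o)\subset O_r(\xi,o)$, suppose not: choose $\eta_i\in O_{r-\e}(p_i,o)\setminus O_r(\xi,o)$ with witnesses $h_i=g_ik_ia_{u_i}$, $d(h_io,o)\le r-\e$; after passing to a convergent subsequence $h_i\to h_\infty$, the alignment estimate gives $h_\infty^-=\xi$ while $h_\infty^+=\lim\eta_i$ and $d(h_\infty o,o)\le r-\e$, so $\lim\eta_i\in O_{r-\e}(\xi,o)\subset\inte O_r(\xi,o)$, contradicting $\eta_i\notin O_r(\xi,o)$. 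The reverse inclusion $O_r(\xi,o)\subset O_{r+\e}(p_i,o)$ is symmetric: starting from a witness $h$ with $h^-=\xi$, $h^+=\eta$, $d(ho,o)\le r$, one uses $p_i\to\xi=h^-$ and the dual form of the alignment estimate—that the forward $\inte A^+$-sector issued from $p_i$ and aimed at $\eta$ converges to the flat $hAo$—to produce, for all large $i$, a point $g_ika_{u'}o$ within $r+\e$ of $o$ with $(g_ik)^+=\eta$, again packaged as a compactness argument that uses the strict monotonicity of shadows in the radius to absorb the $\e$.
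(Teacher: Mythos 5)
The paper offers no proof of this lemma at all --- it is quoted directly from \cite[Lemmas 5.6 and 5.7]{LO1} --- so there is no internal argument to compare against and your proof must be judged on its own terms. Part (1) is correct and complete: the exact evaluation $\beta_\xi(go,o)=u+\beta_\xi(gka_uo,o)$ against a witness $gka_uo\in B(o,r)$, the bound $\|\beta_\zeta(ho,o)\|\le d(ho,o)$ (via the fact that $\sigma(x,\cdot)$ lies in the convex hull of the Weyl orbit of $\mu(x)$), and the Lipschitz property $\|\mu(xy)-\mu(x)\|\le\|\mu(y)\|$ do yield $\kappa=2$; both auxiliary facts are standard and quotable. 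The first inclusion of part (2) is also essentially sound: the alignment estimate is the right statement and your highest-weight ``maximal cancellation'' sketch is the standard proof of it. Two points there should be made explicit: (i) the cancellation argument needs $u_i\to\infty$ \emph{regularly}, which is not automatic from boundedness of $d(g_ik_ia_{u_i}o,o)$ alone but follows from $\|u_i-\mu(g_i^{-1})\|\le r$ (another instance of the same Lipschitz estimate applied to the bounded witness) together with the fact that $\op{i}$ permutes the simple roots; (ii) the claim $O_{r-\e}(\xi,o)\subset\inte O_r(\xi,o)$, which converts the limit point into a contradiction, deserves a line (perturb the limiting witness $h_\infty$ by a small $n\in\check N$: this preserves $h^-=\xi$ and moves $ho$ by $d(o,no)$).

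The genuine gap is the second inclusion $O_r(\xi,o)\subset O_{r+\e}(p_i,o)$. Everything there rests on the ``dual form of the alignment estimate,'' which you neither state precisely nor prove, and which is the substantive half of part (2). What is actually needed is: for every $h$ in the compact set $\{h\in G: h^-=\xi,\ d(ho,o)\le r\}$ and every $\e>0$, for all sufficiently large $i$ there exist $k\in K$ and $u\in\inte\fa^+$ with $(g_ik)^+=h^+$ and $d(g_ika_uo,ho)\le\e$, uniformly over such $h$. After translating by $h^{-1}$ this reduces to the following: if $q_io\to w_0P$ regularly and $q_i=n_ia_{v_i}\kappa_i$ in the Iwasawa decomposition $G=NAK$, then $n_i\to e$ and $-v_i$ eventually lies in $\fa^+$ (so that $u=-v_i+w$ with $w\in\inte\fa^+$ small provides the required witness direction). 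Establishing this requires the uniform contraction $a_u\check n a_{-u}\to e$ as $u\to\infty$ regularly in $\fa^+$, with $\check n$ ranging over compacta of $\check N$ --- i.e., a genuine continuity-at-infinity statement for the Iwasawa projection --- and it also silently uses that the convergence of Definition~\ref{conver} is $G$-equivariant. None of this follows from the first inclusion: what you proved there is an upper-semicontinuity statement for the shadows ($\limsup_i O_{r-\e}(p_i,o)\subset O_r(\xi,o)$), whereas the second inclusion is the lower-semicontinuity statement ($O_r(\xi,o)\subset\liminf_i O_{r+\e}(p_i,o)$), and neither implies the other. As written, the sentence ``the forward $\inte A^+$-sector issued from $p_i$ and aimed at $\eta$ converges to the flat $hAo$'' is the conclusion you need, not an argument for it.
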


\section{Horospherical limit points}
Let $\Ga<G$ be a Zariski dense discrete subgroup.
A $\Gamma$-tight horoball based at $\xi\in \F$ is a subset of the form $\cal H_{\xi}=g N (\exp \cal C)  (o)$ where $g\in G$ is such that $\xi=gP$ and $\cal C$ is a closed cone contained in $\inte \L\cup\{0\}$. For $T>0$, we write $\cal H_{\xi}(T)= gN (\exp (\cal C-\cal C_T))o$. We recall the definition from the introduction:
\begin{Def} \label{dhoro} We say that $\xi\in \cal F$  is a horospherical limit point of $\Ga$ if 
   there exists a $\Ga$-tight horoball $\cal H_\xi$ based at $\xi$ such that  $\cal H_\xi(T)\cap \Ga(o)\ne \emptyset $ for all $T>1$.
 \end{Def}
In this section we provide a mostly self-contained proof of the following theorem:
\begin{thm}\label{NMhoro} Let $[g]\in \E$. The following are equivalent:
\begin{enumerate}
    \item  $g^+=gP\in \Lambda$ is a horospherical limit point;
    \item $[g]NM$ is dense in $\E$.
\end{enumerate}
\end{thm}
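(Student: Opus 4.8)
The plan is to prove the two implications separately, using the Busemann-cocycle characterization of horospherical limit points (the second bullet of Definition \ref{dhoro}, whose equivalence with the first is Lemma \ref{hd}) as the working definition, since it translates denseness questions about $N$-orbits into statements about sequences $\gamma_j \in \Gamma$ with $\beta_\xi(o,\gamma_j o)$ escaping to infinity inside a cone $\cal C \subset \inte \L \cup \{0\}$.

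\medskip

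\emph{Direction (1) $\Rightarrow$ (2).} Suppose $g^+ = \xi$ is a horospherical limit point. The goal is to show $\overline{[g]NM} = \E$. Since $\E$ is $P$-minimal, it suffices to show that $\overline{[g]NM}$ is $P$-invariant, equivalently that it contains a point of every $P^\circ$-orbit, and for this I would show that the closure contains some fixed reference point lying in $\E$ (then invoke minimality). The strategy is the standard ``contraction plus recurrence'' mechanism. Pick a sequence $\gamma_j \in \Gamma$ with $\beta_\xi(o,\gamma_j o) \to \infty$ inside $\cal C \subset \inte \L$. Writing $g^{-1}\gamma_j^{-1}$ via the Busemann relation \eqref{eq_Busemann}, I would decompose $\gamma_j = g n_j a_{u_j} m_j k_j$-type expressions, where the $A$-component records $\beta_\xi(o,\gamma_j o) = u_j \to \infty$ along a regular direction (regularity is guaranteed because $u_j$ stays in $\inte \L \subset \inte \fa^+$). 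Conjugating $N$ by $a_{u_j}^{-1}$ contracts, so that for a suitable $n \in N$, the points $[g]n a_{u_j} $ and $[\gamma_j] \cdot(\text{bounded})$ become close; passing to the limit, using that $\gamma_j \in \Gamma$ so $[g]n a_{u_j}$ returns to a bounded set modulo $\Gamma$, one extracts a limit point in $\overline{[g]N}$ (after a compact $M$-adjustment) that is an arbitrary target in $\E$. The role of $M$ is to absorb the $\kappa_1, \kappa_2$ ambiguity in the Cartan/Iwasawa data.

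\medskip

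\emph{Direction (2) $\Rightarrow$ (1).} Conversely, assume $[g]NM$ is dense in $\E$. I would argue contrapositively or directly: density means there exist $n_j \in N$, $m_j \in M$ and $\gamma_j \in \Gamma$ with $\gamma_j g n_j m_j \to h$ for a chosen $h$ whose $A$-component can be driven to infinity. Choosing the target $h$ along a direction deep in $\inte \L$ (which is nonempty and is exactly where recurrence to $\E$ is plentiful, cf.\ Lemma \ref{st} and Lemma \ref{hor}), one reads off from the convergence that $\beta_{g^+}(o, \gamma_j^{-1} o)$ lies in a bounded neighborhood of a cone inside $\inte \L$ and escapes to infinity, which is precisely the second bullet of Definition \ref{dhoro}. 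The shadow estimate Lemma \ref{sh}(1) is the key quantitative tool here: it controls $\|\beta_\xi(g o, o) - \mu(g^{-1})\|$ up to $\kappa r$, allowing one to convert the ``$[g]NM$ enters a prescribed region'' statement into a Busemann/Cartan-projection statement with uniformly bounded error, and hence to exhibit the required cone $\cal C$.

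\medskip

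\emph{Main obstacle.} The hard part will be direction (1) $\Rightarrow$ (2): extracting genuine denseness in \emph{all} of $\E$ rather than merely accumulating at a single point. The difficulty is the higher-rank phenomenon flagged in the Remark after Definition \ref{dhoro} — a sequence $\gamma_i(o) \in \cal H_\xi(T_i)$ need not converge to $\xi$, so one cannot naively transfer rank-one arguments. The resolution is to exploit $P$-minimality of $\E$ together with the fact that the escaping direction lies in $\inte \L$, so that one can prescribe the limiting $K/M$-data freely (using Lemma \ref{lemma_attracting_fixed_pt} to find loxodromic elements whose attracting points fill out neighborhoods, giving the needed transitivity on the Furstenberg boundary). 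Carefully tracking the interplay between the contracting $N$-direction, the regular escape in $A$, and the compact $M$-slack — and showing the limit set one produces is genuinely $P^\circ$-invariant and hence all of $\E$ by minimality — is where the real work lies.
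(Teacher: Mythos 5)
Your direction (2) $\Rightarrow$ (1) is essentially the paper's argument and is fine: one pushes the target $h=ga_{-2tu}$ deep along $u\in\inte\L$ and reads off that $\ga_i^{-1}go$ enters $\cal H_\xi(t)$. The gap is in (1) $\Rightarrow$ (2). Your stated reduction --- ``it suffices to show that the closure contains some fixed reference point lying in $\E$ (then invoke minimality)'' --- is false: $\overline{[g]NM}$ is only $NM$-invariant, not $P$-invariant, so $P$-minimality of $\E$ gives nothing from the mere presence of a point (indeed $[g]$ itself already lies in $\E$, and Corollary \ref{th2} exhibits non-dense $NM$-orbits in $\E$, so no argument that merely produces one accumulation point can succeed). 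Moreover the obstruction to density lives in the $A$-direction, not in the $K/M$-data you propose to prescribe via Lemma \ref{lemma_attracting_fixed_pt}: a closed $NM$-invariant subset of $\E$ can still miss most $A$-levels. Your contraction-plus-recurrence step does legitimately produce accumulation points of $[g]N$, but only along the single ray determined by the directions $\|u_j\|^{-1}u_j$ of your original sequence.

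What is missing is the mechanism that makes $\overline{[g]NM}$ approximately invariant under all of $A$. The paper does this in three steps: (i) a horospherical limit point is $u$-horospherical for \emph{every} $u\in\inte\L$ (Proposition \ref{allu}, itself a nontrivial construction that multiplies the $\gamma_j$ by high powers of loxodromic elements with prescribed Jordan projections and controls the resulting Busemann values via the shadow lemma); (ii) for $u\in\lambda(\Gamma)\cap\inte\L$, $u$-horosphericity forces $\overline{[g]N}$ to contain a $u$-periodic point (Proposition \ref{prop_tau_horo_to_periodic}, which is the precise form of your recurrence step); (iii) by Benoist's theorem the group generated by $\lambda(\Gamma)\cap\inte\L$ is dense in $\fa$, so one can chain finitely many such periodic points $x_1,\dots,x_q$ with $\sum_{i}\z\,\lambda(\gamma_i)$ an $\e$-net of $\fa$, whence $\overline{[g]NM}$ contains a $2\e$-net of a full $P$-orbit and hence, by $P$-minimality, a $2\e$-net of $\E$ for every $\e>0$. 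Steps (i) and (iii) --- in particular the appeal to the density of $\overline{\langle\lambda(\Gamma)\rangle}$ in $\fa$ --- are absent from your proposal, and without them the argument cannot close.
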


The main external ingredient in our proof is the density of the group generated by the Jordan projection $ \lambda(\Gamma)$, due to Benoist \cite{Benoist_II}, that is,
\[ \fa = \overline{\langle \lambda(\Gamma) \rangle}  \]
for every Zariski dense discrete subgroup $ \Gamma < G $. In fact, for every  cone $ \cal C \subset \L $ with non-empty interior, there exists a Zariski dense subgroup $ \Gamma' < \Gamma $ with $ \L_{\Gamma'} \subset \cal C $ (see \cite{Benoist}); therefore we have
\begin{equation*}
	\fa = \overline{\langle \lambda(\Gamma) \cap \inte \L \rangle}.
\end{equation*}
\medskip

It is convenient to use a characterization of horospherical limit points in terms of the Busemann function.
\begin{lem}\label{hd} 
For $ \xi \in \La $, we have $\xi\in \La_h$  if and only if there exists a closed cone $\cal C \subset {\inte \L} \cup \{0\}$ and a sequence $ \gamma_j \in \Gamma $ satisfying
	\begin{equation}\label{h1}  \beta_\xi(o,\gamma_jo) \to \infty \quad \text{and } \quad \beta_\xi(o,\gamma_jo) \in \cal C \text{ for all large } j\ge 1.  \end{equation} 
\end{lem}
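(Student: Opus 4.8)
The plan is to unwind the definitions on both sides and show that the geometric condition in Definition~\ref{dhoro} is literally a restatement of the Busemann condition \eqref{h1}, up to a bounded error controlled by the diameter of the ball defining the horoball. The key observation is that the point $\gamma_j^{-1}(o)$ lying in a $\Gamma$-tight horoball $\cal H_\xi(T_j)$ with $T_j\to\infty$ should correspond, after applying the Busemann function based at $\xi$, to the element $\beta_\xi(o,\gamma_j o)$ lying in the cone $\cal C$ and tending to infinity. I would set things up using the two shadow lemmas of \Cref{sh}, which relate the Busemann cocycle $\beta_\xi(o,\gamma o)$ to the Cartan projection $\mu(\gamma^{-1})$ up to a bounded constant $\kappa r$, where $r$ controls the size of the ball.

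First I would prove the easy direction, say \eqref{h1} $\Rightarrow$ horospherical. Suppose we have a closed cone $\cal C\subset\inte\L\cup\{0\}$ and $\gamma_j\in\Gamma$ with $\beta_\xi(o,\gamma_j o)\in\cal C$ and $\beta_\xi(o,\gamma_j o)\to\infty$. Writing $\xi=gP$, the defining relation \eqref{eq_Busemann} for the Busemann map expresses $g^{-1}k_\xi$ in the $KAN$ decomposition with $A$-component $\exp(-\beta_\xi(o,g o))$; translating by $\gamma_j$ and using the cocycle identity \eqref{bus}, the point $\gamma_j(o)$ can be written in the form $g\,n_j\,a_{u_j}(o)$ for some $n_j\in N$ and $u_j\in\fa$ that differs from $\beta_\xi(o,\gamma_j o)$ by a uniformly bounded amount coming from the $K$-part of the decomposition. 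Enlarging $\cal C$ slightly to a cone $\cal C'$ still inside $\inte\L\cup\{0\}$ absorbs this bounded error, so $\gamma_j(o)\in gN(\exp\cal C')(o)=\cal H_\xi$, and since $\|u_j\|\to\infty$ we get $\gamma_j(o)\in\cal H_\xi(T)$ for every $T$ once $j$ is large. This gives $\xi\in\La_h$.

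For the converse, suppose $\xi\in\La_h$, witnessed by a $\Gamma$-tight horoball $\cal H_\xi=gN(\exp\cal C)(o)$ with $\cal H_\xi(T)\cap\Gamma(o)\ne\emptyset$ for all $T>1$. Choose $\gamma_j(o)\in\cal H_\xi(T_j)$ with $T_j\to\infty$; then $\gamma_j(o)=g\,n_j\,a_{v_j}(o)$ with $v_j\in\cal C$ and $\|v_j\|\to\infty$. Running the $KAN$ decomposition in reverse recovers $\beta_\xi(o,\gamma_j o)$ as $v_j$ plus a bounded $K$-contribution; again enlarging the cone slightly yields a closed cone $\cal C'\subset\inte\L\cup\{0\}$ with $\beta_\xi(o,\gamma_j o)\in\cal C'$ for all large $j$ and $\beta_\xi(o,\gamma_j o)\to\infty$, which is exactly \eqref{h1}.

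The main obstacle will be controlling the discrepancy between the $N\!A$-coordinate $v_j$ of a point in the horoball and the Busemann value $\beta_\xi(o,\gamma_j o)$: these agree only up to the bounded contribution of the $K$-factor appearing when one passes between the decomposition $g n a$ defining the horoball and the $KAN$ decomposition defining $\sigma(\cdot,\xi)$. I expect this error to be uniformly bounded in terms of the data $(g,\cal C)$ and the fixed ball radius implicit in the shadow estimates, so that \emph{enlarging the cone} (replacing $\cal C$ by a slightly larger closed cone still contained in $\inte\L\cup\{0\}$, which is possible precisely because $\cal C$ sits in the open cone $\inte\L$) cleanly absorbs it. Verifying that this enlargement stays inside $\inte\L\cup\{0\}$ and that the bound is genuinely uniform---rather than growing with $\|v_j\|$---is the point requiring care; the shadow lemma \Cref{sh}(1) with its constant $\kappa$ is the tool I would use to make the bound explicit.
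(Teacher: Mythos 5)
Your proposal is correct and follows essentially the same route as the paper: decompose $\gamma_j$ in the horospherical coordinates $G=gNAK$, use the cocycle property of the Busemann function to see that $\beta_\xi(o,\gamma_j o)$ equals the $A$-coordinate $u_j$ up to a fixed bounded term, and absorb that term by enlarging $\cal C$ to a slightly larger closed cone $\cal C'\subset \inte\L\cup\{0\}$. The only (harmless) deviation is your appeal to the shadow lemma: no shadow estimate is needed, since the discrepancy is exactly the constant $\beta_{gP}(e,g)$ (plus a bounded $\beta_P(e,p)$ accounting for the choice of representative of $\xi$), which falls out directly from the identities $\beta_P(e,n)=0$ for $n\in N$ and $\beta_P(e,a_u)=u$.
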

\begin{proof} 
Let $\xi=gP\in \La_h$ be as defined in Definition \ref{dhoro}. Then there exists $\gamma_j= gp n_j a_{u_j}k_j\in \Gamma$
for some $p\in P$, $n_j\in N$, $k_j\in K$ and $u_j\to \infty$ in some closed cone $\cal C$ contained in $\inte\L\cup\{0\}$. Fix some closed cone $\cal C'\subset \inte \L\cup\{0\}$ whose interior contains $\cal C$.
Note that
\begin{align*}
    \beta_{\xi}(o, \ga_j o) &= \beta_{gP}(e, g)+ \beta_{gP}(g, gpn_j a_{u_j})\\ &=
\beta_P(g^{-1}, e)+ \beta_P(e,p)+ \beta_P(e, n_j)+\beta_P(e,  a_{u_j})\\
&
=\beta_{P}(g^{-1},p)+u_j.
\end{align*}
Therefore the sequence $\beta_{\xi}(o, \ga_j)-u_j$ is uniformly bounded.   Since $u_j\in \cal C$,
$\beta_{\xi}(o,\ga_j o)\in \cal C'$ for all large $j$. Therefore \eqref{h1} holds. For the other direction,
let $\ga_j$ and $\cal C$ satisfy \eqref{h1} for $\xi=gP$ for $g\in G$.
 Since $G=gNAK$, we may write
$\ga_j= gn_ja_{u_j} k_j $ for some $n_j\in N, u_j\in \fa$ and $k_j\in K$. By a similar computation as above,
the sequence $\beta_\xi(o, \ga_j o)-u_j$ is uniformly bounded.
It follows that $u_j\in \cal C'$ for all large $j$ and $u_j\to \infty$. Therefore for any $T>1$, there exists $j>1$ such that
$\ga_j (o)\in g N \exp (\cal C'-\cal C'_T)(o)$. This proves $\xi\in \La_h$.
\end{proof}

We note that condition \eqref{h1} is independent of the choice of basepoint $ o $. Indeed, for any $g\in G$ and $\xi\in \F$ and for all $ \gamma \in \Gamma $ we have
\[ \beta_\xi(o, \ga o) = \beta_\xi(o, go)+\beta_\xi (go, \ga go)+\beta_\xi (\ga go, \ga o) ,\]
and hence
\begin{align*}
	\|\beta_\xi(o, \ga o)-\beta_\xi (go, \ga go)\|&=\|\beta_\xi(o, go)+\beta_\xi (\ga go, \ga o)\|\\
	&=\|\beta_\xi(o, go)-\beta_{\gamma^{-1}\xi} (o,go)\| \\ &
 \le 2 \cdot \max_{\eta \in \Furst} \|\beta_\eta (o, go)\| .
\end{align*}
Since this bound is independent of $\ga\in \Ga$,
condition \eqref{h1} implies that for any $p=go \in X$,
\begin{equation}\label{h2}  \beta_\xi(p,\gamma_j p) \to \infty \quad \text{and } \quad \beta_\xi(p,\gamma_jp) \in \cal C \text{ for all large }j.  \end{equation} 
\medskip

Let us now consider the following seemingly stronger condition for a limit point being horospherical:
\begin{definition} For $u\in \fa^+$,
	 a point $ \xi \in \cal F$ is called a $ u $-horospherical limit point if
	 for some $p\in X$ (and hence for any $p\in X$),
	 there exists a constant $ R>0 $ and a sequence $ \gamma_j \in \Gamma $ satisfying
		\[ \beta_\xi(p,\gamma_jp)\to \infty \quad \text{and}\quad \|\beta_\xi(p,\gamma_jp)-\R_+u\| < R \quad \text{for all }j. \]
	We denote the set of $ u $-horospherical limit points by $ \Lambda_h(u) $.
\end{definition}

By $ G $-invariance of the Busemann map, the set of horospherical (resp.
$u$-horospherical) limit points is $\Ga$-invariant. Therefore for $x=[g]\in \Ga\ba G$,
we may say $x^+:=\Ga gP$ horospherical (resp.
$u$-horospherical) if $g^+$ is.

For $u\in \fa$, we call $x\in \G\ba G$ a $u$-periodic point  if
$x a_u = xm_0$ for some $m_0\in M$; note that $xa_{\br u} M_0$ is then compact. Note that for $u\in \inte\fa^+$, the existence of a $u$-periodic point is equivalent to the condition that $u\in \lambda(\Ga)$.

\begin{lem} \label{hor}  Let $u\in \fa^+$.
If $x\in \Ga\ba G$ is $u$-periodic, then $x^+\in \F$ is a
$u$-horospherical limit point.
\end{lem}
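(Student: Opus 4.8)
The plan is to extract a single cyclic sequence $\gamma^j\in\Gamma$ directly from the periodicity relation and compute its $\fa$-valued Busemann displacement exactly, finding that it moves precisely along the ray $\R_+u$. Since the $u$-horospherical condition is independent of the choice of basepoint (as recorded in the discussion around \eqref{h2}), it suffices to verify the defining condition at the convenient basepoint $p=go$, where $x=[g]$; this is what keeps the computation free of spurious terms. We may assume $u\neq 0$, as $\Lambda_h(0)=\emptyset$.

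First I would unwind the hypothesis. The relation $xa_u=xm_0$ means $\Ga g a_u=\Ga g m_0$, so there is $\gamma\in\Ga$ with $g a_u=\gamma g m_0$, that is,
\[ \gamma = g\, a_u\, m_0^{-1}\, g^{-1}. \]
Because $M$ centralizes $A$, the elements $a_u$ and $m_0$ commute, so $(a_u m_0^{-1})^j=a_{ju}m_0^{-j}$ and hence
\[ \gamma^j = g\, a_{ju}\, m_0^{-j}\, g^{-1}, \qquad j\ge 1. \]
I would then take $\gamma_j=\gamma^j$ as the candidate sequence.

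Next I compute $\beta_{g^+}(go,\gamma^j go)$. Applying the $G$-equivariance $\beta_\xi(x,y)=\beta_{h\xi}(hx,hy)$ of \eqref{bus} with $h=g$ and $\xi=P$, and using $g^{-1}\gamma^j g=a_{ju}m_0^{-j}$ together with the fact that $m_0^{-j}\in M\subset K$ fixes $o$, I obtain
\[ \beta_{gP}(go,\gamma^j go)=\beta_P\!\big(o,\,a_{ju}m_0^{-j}o\big)=\beta_P(o,a_{ju}o)=ju, \]
where the last equality is the normalization $\beta_P(o,a_wo)=w$. Since $u\in\fa^+\setminus\{0\}$, the vector $ju$ lies on the ray $\R_+u$ and $ju\to\infty$; thus $\|\beta_{g^+}(go,\gamma^j go)-\R_+u\|=0$ for every $j$, and the defining condition for $g^+\in\Lambda_h(u)$ holds with, say, $R=1$.

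The computation has no genuine obstacle; the only points requiring care are bookkeeping ones: translating the coset identity $xa_u=xm_0$ into the explicit element $\gamma$ with the correct side and inverse, and checking that the factor $m_0^{-j}$ is harmless because it lies in $K$ and therefore both fixes $o$ and is invisible to the $A$-valued Busemann cocycle. Invoking basepoint-independence to evaluate at $go$ rather than $o$ is the one step that streamlines the argument, avoiding the $g^{-1}o$ terms that would otherwise appear.
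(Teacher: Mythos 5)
Your proof is correct and is essentially the paper's own argument: extract $\gamma=g a_u m_0^{-1}g^{-1}$ from the periodicity relation, note $\gamma^j=ga_{ju}m_0^{-j}g^{-1}$, and compute $\beta_{gP}(go,\gamma^j go)=\beta_P(o,a_{ju}o)=ju$ using $G$-equivariance and the fact that $M\subset K$ fixes $o$. The paper's version is just terser, writing $\gamma=ga_umg^{-1}$ for some $m\in M$ and stating the same Busemann computation.
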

\begin{proof} Since $x$ is $u$-periodic, there exist
$g\in G$ with $x=[g]$ and $\ga\in \Ga$ such that
$\ga=g a_u m g^{-1}$ for some $m\in M$, and $y_\ga=g^+ \in \La$.
Moreover, for any $k\ge 1$
$$\beta_{gP} (go, \ga^k go)=\beta_{P}(o, a^k_u o)=k u.$$
This implies $gP$ is $u$-horospherical.
\end{proof}

\begin{prop}\label{prop_tau_horo_to_periodic}
Let $x\in \Ga\ba G$.
	If $ x^+$ is $u$-horospherical for some $u\in \lambda(\Gamma)$ then
	the closure $\overline{xN}$ contains a $ u $-periodic point.
\end{prop}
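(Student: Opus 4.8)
The plan is to turn the $u$-horospherical hypothesis into a \emph{compact return} of the forward-translated horosphere, and then to correct by a loxodromic element of $\Ga$ with Jordan projection $u$ so as to land exactly on a periodic $A$-orbit.

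\emph{Reformulation of the hypothesis.} Write $x=[g]$. Since $G=gNAK$, every element of the $u$-horospherical sequence can be written $\ga_j=gn_ja_{s_j}k_j$ with $n_j\in N$, $s_j\in\fa$ and $k_j\in K$; the same computation as in Lemma \ref{hd} gives $\beta_{g^+}(o,\ga_jo)=\beta_{g^+}(o,go)+s_j$, so the defining condition of $\Lambda_h(u)$ forces $s_j\to\infty$ while staying within a bounded distance of $\R_+u$. Passing to a subsequence we may assume $k_j\to k_\infty\in K$. As $gn_ja_{s_j}=\ga_jk_j^{-1}$, this yields the key convergence
\[ x\,n_ja_{s_j}=[gn_ja_{s_j}]=[k_j^{-1}]\ \longrightarrow\ q:=[k_\infty^{-1}]\in\{[k]:k\in K\}. \]
Thus $xN$, pushed forward by $a_{s_j}$, returns to the compact set $\{[k]:k\in K\}$; the whole difficulty is that these points lie in $xNA$ and not in $xN$, i.e.\ we must remove the escaping factor $a_{s_j}$.

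\emph{Inserting periodicity.} Since $u\in\lambda(\Ga)$ we also have $\op{i}u\in\lambda(\Ga)$, and Lemma \ref{lemma_attracting_fixed_pt} (with $v=\op{i}u$ and $\zeta=(k_\infty^{-1})^+$) produces a loxodromic $\beta\in\Ga$ with $\lambda(\beta)=\op{i}u$ and $y_\beta$ opposite to $(k_\infty^{-1})^+$. Set $\ga':=\beta^{-1}$, so $\lambda(\ga')=u$ and $y_{\ga'^{-1}}=y_\beta$, giving $((k_\infty^{-1})^+,y_{\ga'^{-1}})\in\F^{(2)}$. Write $\ga'=h'a_um_0h'^{-1}$ with $h'^+=y_{\ga'}$, $h'^-=y_{\ga'^{-1}}$; then every point of $[h']AM$ is $u$-periodic. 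Choosing integers $m_j\ge0$ with $\omega_j:=s_j-m_ju$ bounded (possible since $s_j\to\infty$ near $\R_+u$), and using $\ga_j^{-1}g=k_j^{-1}a_{-s_j}n_j^{-1}$ together with $\ga'^{m_j}=h'a_{m_ju}m_0^{m_j}h'^{-1}$, I would compute
\[ \ga'^{m_j}\ga_j^{-1}g\,n_j\ =\ h'\,m_0^{m_j}\bigl(a_{m_ju}\,c_j\,a_{-m_ju}\bigr)\,a_{-\omega_j},\qquad c_j:=h'^{-1}k_j^{-1}\to c:=h'^{-1}k_\infty^{-1}. \]

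\emph{The main obstacle and its resolution.} The general-position choice says precisely that $c$ lies in the big Bruhat cell $\ov N AMN$, so it has $\ov N$-, $A$-, $M$- and $N$-components that converge as $c_j\to c$. Under $a_{m_ju}(\cdot)a_{-m_ju}$ the $\ov N$-component is \emph{contracted} to $e$ as $m_j\to\infty$ and the $AM$-components are fixed, but the $N$-component is \emph{expanded} and blows up; this expanding factor is the crux. I would absorb it by replacing $n_j$ with $\til n_j:=n_jn_j'$, where $n_j'\in N$ is chosen to cancel exactly the (rightmost) expanded $N$-part; this is harmless because $\til n_j\in N$, so $[g\til n_j]=x\til n_j\in xN$. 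After the correction the expression simplifies to $h'\,m_0^{m_j}\,(a_{m_ju}\ov n^c_j a_{-m_ju})\,a^c_jm^c_ja_{-\omega_j}$, which—using the contraction of $\ov N$, the compactness of $M$ (to make $m_0^{m_j}$ converge), and the boundedness of $\omega_j$—converges along a further subsequence to some $\varphi\in h'AM$. Since $\ga'^{m_j}\ga_j^{-1}\in\Ga$, we conclude $x\til n_j=[\ga'^{m_j}\ga_j^{-1}g\til n_j]\to[\varphi]$, so $[\varphi]\in\ov{xN}$ is the desired $u$-periodic point. (The contraction of $\ov N$ uses $u\in\inte\fa^+$, which holds because $\ga'$ is loxodromic.)
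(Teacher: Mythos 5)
Your proof is correct and follows essentially the same route as the paper's: both express the $u$-horospherical condition in $gNAK$-coordinates, place a $u$-periodic $AM$-coset in general (Bruhat) position with the accumulation point of $\gamma_j^{-1}g$, absorb the expanded $N$-component of the $\check{N}AMN$-factorization back into the orbit $xN$, and let the $\check{N}$-component contract under conjugation by $a_{m_ju}$. The only cosmetic difference is that the paper starts from an arbitrary $u$-periodic point $[g_0]$ and replaces $g_0$ by $\gamma g_0$ to achieve transversality, while you build the periodic orbit directly from a conjugated loxodromic element via Lemma \ref{lemma_attracting_fixed_pt}; the two devices are interchangeable.
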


\begin{proof} Choose $g\in G$ so that $x=[g]$.
	We may assume without loss of generality that $g=k\in K$, since
	$kan N= k N a$, and a translate of a $u$-periodic point by an element of $A$ is again a $u$-periodic point.
	Since $u\in \lambda(\Gamma)$,
	there exists a $u$-periodic point, say, $x_0\in \Ga\ba G$.
	It suffices to show that
	\begin{equation}\label{suf} \overline{[k]N}\cap x_0 AM\ne \emptyset\end{equation}
	as every point in $x_0AM$ is $u$-periodic.
	
	Since $k^+$ is $ u $-horospherical and using \eqref{eq_Busemann}, there exists $ R>0 $ and sequences $ \gamma_j \in \Gamma $, $ u_j \to \infty $ in $ \fa^+ $ and $ k_j \in K $ and $ n_j \in N $ satisfying
	$\gamma_j^{-1} k = k_j a_{-u_j} n_j $
	or
	\begin{equation}\label{eq_Gamma_k_as_k_xiAMN}
		k_j = \gamma_j^{-1} k n_j^{-1} a_{u_j},
	\end{equation} 
	with $ \|\R_+u - u_j \| < R $ for all $ j $. Let $ \ell_j \to \infty $ be a sequence of integers satisfying \begin{equation}
	   \label{lj}  \|\ell_ju - u_j \| < R+\|u\| \;\quad\text{for all $ j \ge 1$}.	\end{equation}
	
By passing to a subsequence, we may	assume without loss of generality that $ \gamma_j^{-1}kP$ converges to
some $\xi_0 \in \Furst $. Since $\check{N}P$ is Zariski open and $ \Gamma $ is Zariski dense,
we may choose $g_0\in G$ such that $x_0=[g_0]$ and  $g_0^{-1}\xi_0\in \check{N}P$. Let $h_0\in \check{N}$ be such that
 $ \xi_0=g_0 h_0 P  $.
		\newcommand{\tn}{\tilde{n}}
	Since $ g_0\check{N}P $ is open and $\ga_j^{-1}kP\to g_0h_0P$,
	we may assume that for all $j$, there exists $ h_j \in \check{N} $ 
satisfying $ g_0 h_j P = \gamma_j^{-1}k P = k_j P $ with $ h_j \to h_0 $. Let $p_j= a_{v_j}m_j \tn_j \in P=AMN$ be such that $g_0 h_j p_j= k_j$; since $h_j\to h_0$ and the product map $\check{N}\times P\to \check{N}P$ is a diffeomorphism, the sequence $p_j$, as well as ${v_j}\in \fa$, are  bounded.

Therefore by \eqref{eq_Gamma_k_as_k_xiAMN}, we get for all $j$,
	\begin{align*}
		g_0 &= k_j p_j^{-1}h_j^{-1}
		\\& =\gamma_j^{-1} kn^{-1}_j a_{u_j} ( \tn^{-1}_j m_j^{-1} a_{-v_j}) h_j^{-1}  \\
		&= \gamma_j^{-1} k n^{-1}_j (a_{u_j} \tn^{-1}_j a_{-u_j}) a_{u_j} m_j^{-1} a_{-v_j}h_j^{-1}  \\
		&= \gamma_j^{-1} k n^{-1}_j (a_{u_j} \tn^{-1}_j a_{-u_j}) m_j^{-1} (a_{u_j-v_j}h_j^{-1} a_{-u_j+v_j})  a_{u_j-v_j}.
	\end{align*}

Since $h_j^{-1} \in \check{N}$ and $ v_j \in \fa $ are uniformly bounded and since
$u_j\to \infty$ within a bounded neighborhood of the ray $\br_+ u\in \inte\fa^+$, we have
	\[ {\tilde h}_j = a_{u_j-v_j}h_j^{-1}  a_{-u_j+v_j} \to e \quad \text{in }\check{N}. \]
By setting $ n'_j = n^{-1}_j (a_{u_j} \tn^{-1}_j a_{-u_j}) \in N $, 	we may now write
	\[ g_0 = \gamma_j^{-1} k n'_j m_j^{-1} {\tilde h}_j a_{u_j-v_j}. \]
	
Since $x_0$ is $u$-periodic, there exists $ \gamma_0 \in \Gamma $ such that  $ \gamma_0 = g_0 a_u m_0 g_0^{-1} $ for some $ m_0 \in M $. Hence for all $j\ge 1$,
	\begin{align*}
		\gamma_0^{-\ell_j} &= g_0 a_{-\ell_j u} m_0^{-\ell_j} g_0^{-1} = (\gamma_j^{-1} kn'_j  m_j^{-1} {\tilde h}_j   a_{u_j-v_j} ) ( a_{-\ell_j u} m_0^{-\ell_j }) g_0^{-1}.
	\end{align*}
In other words,
	\[ \gamma_j^{-1} k n'_j = \gamma_0^{-\ell_j}g_0 m_0^{\ell_j} a_{-u_j+\ell_j u+v_j} {\tilde h}_j^{-1}m_j. \]
	Since the sequence $ -u_j+\ell_j u+ v_j \in \fa $ is uniformly bounded by \eqref{lj} and  $ {\tilde h}_j \to e $ in $ \check{N} $, we conclude that the sequence $ \Gamma k n'_j $ has an accumulation point in $ \Gamma g_0 AM $. This proves \eqref{suf}.
\end{proof}

It turns out that a horospherical limit point is
also $ u $-horospherical for any $u\in \inte \L$:
\begin{prop}\label{prop_horo_to_tau_horo}\label{allu}
For each $ u \in {\inte \L} $, we have $ \Lambda_h=\Lambda_h(u) $. 
\end{prop}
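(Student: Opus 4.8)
The plan is to prove the equality as two inclusions. The inclusion $\Lambda_h(u)\subseteq\Lambda_h$ is the easy one and I would dispatch it first: if $\xi\in\Lambda_h(u)$ is witnessed by $\gamma_j\in\Gamma$ with $\beta_\xi(o,\gamma_jo)\to\infty$ and $\|\beta_\xi(o,\gamma_jo)-\R_+u\|<R$, then, since $u\in\inte\L$, I fix a closed cone $\cal C\subset\inte\L\cup\{0\}$ with $u\in\inte\cal C$. Because $\beta_\xi(o,\gamma_jo)$ has norm tending to infinity while staying within bounded distance $R$ of the ray $\R_+u\subset\inte\cal C$, and a bounded neighborhood of such a ray is contained in $\cal C$ away from the origin, we get $\beta_\xi(o,\gamma_jo)\in\cal C$ for all large $j$. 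Hence \eqref{h1} holds and $\xi\in\Lambda_h$ by Lemma \ref{hd}.

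For the reverse inclusion $\Lambda_h\subseteq\Lambda_h(u)$, fix $\xi\in\Lambda_h$ and a target direction $u\in\inte\L$. By Lemma \ref{hd} choose $\gamma_j\in\Gamma$ with $w_j:=\beta_\xi(o,\gamma_jo)\to\infty$, $w_j\in\cal C$ for a fixed closed cone $\cal C\subset\inte\L\cup\{0\}$; passing to a subsequence I may also assume $\gamma_j^{-1}\xi\to\zeta_0$ in $\F$. The idea is to \emph{steer} this escaping sequence onto the ray $\R_+u$ by right-multiplying each $\gamma_j$ by a correcting element $c_j\in\Gamma$. Using the cocycle identity \eqref{bus} together with $G$-invariance of $\beta$,
\[ \beta_\xi(o,\gamma_jc_jo)=w_j+\beta_{\gamma_j^{-1}\xi}(o,c_jo). \]
If $c_j\in\Gamma$ is loxodromic with $\mu(c_j)\to\infty$ regularly and with its repelling fixed point $y_{c_j^{-1}}$ close to $\zeta_0$, then $\gamma_j^{-1}\xi$ lies in a shadow $O_r(c_jo,o)$ (which clusters near $c_j^-=y_{c_j^{-1}}$), and Lemma \ref{sh}(1) gives $\beta_{\gamma_j^{-1}\xi}(o,c_jo)=-\i\mu(c_j)+O(r)$. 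Using $\i^2=\mathrm{id}$, it therefore suffices to produce, for some sequence $t_j\to\infty$, elements $c_j$ with repelling point approaching $\zeta_0$ and $\mu(c_j)=\i(w_j-t_ju)+O(1)$: then $\beta_\xi(o,\gamma_jc_jo)=w_j-(w_j-t_ju)+O(1)=t_ju+O(1)$, which stays within bounded distance of $\R_+u$ and tends to infinity, witnessing $\xi\in\Lambda_h(u)$.

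It remains to choose $t_j$ and construct $c_j$. Since $w_j$ lies in the compact set of directions $\cal C$ strictly inside $\inte\L$ and $u\in\inte\L$, the ray $\{w_j-tu:t\ge0\}$ exits the cone $\L$ only after a time comparable to $\|w_j\|$; choosing $t_j$ to be half of this exit time, the target $s_j:=\i(w_j-t_ju)$ lies in $\inte\L$ (using $\i\,\inte\L=\inte\L$), has norm tending to infinity, and has direction confined to a compact subset of the directions of $\inte\L$. I then fix finitely many loxodromic $\eta_1,\dots,\eta_m\in\Gamma$ with $\lambda(\eta_i)\in\inte\L$ whose directions positively span a cone containing all the $s_j$-directions; such elements exist because, by Benoist's theorem $\fa=\overline{\langle\lambda(\Gamma)\cap\inte\L\rangle}$ and the definition of $\L$, the directions of $\lambda(\Gamma)\cap\inte\L$ are dense in those of $\inte\L$. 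Writing $s_j=\sum_i r_i^{(j)}\lambda(\eta_i)$ with $r_i^{(j)}\ge0$ and rounding to nonnegative integers gives $\sum_i n_i^{(j)}\lambda(\eta_i)=s_j+O(1)$; by the standard additivity of the Cartan projection along aligned products of loxodromic elements in general position, $c_j:=\eta_1^{n_1^{(j)}}\cdots\eta_m^{n_m^{(j)}}$ satisfies $\mu(c_j)=s_j+O(1)$ with uniform error, as required. To make $y_{c_j^{-1}}\to\zeta_0$ I prescribe the repelling fixed point of the last factor to approach $\zeta_0$ via Lemma \ref{lemma_attracting_fixed_pt} while keeping the $\eta_i$ transverse (possible since $\zeta_0\in\La$, loxodromic fixed points are dense in $\La$, and transversality is Zariski-generic), because the repelling point of the aligned product is controlled by that of $\eta_m$.

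The hard part will be the last construction: one must realize the prescribed Cartan projection $s_j$ with \emph{bounded} (not merely sublinear) error while \emph{simultaneously} steering the repelling fixed point $y_{c_j^{-1}}$ toward $\zeta_0$ fast enough that $\gamma_j^{-1}\xi$ lands in the shadow $O_r(c_jo,o)$ for a \emph{uniform} radius $r$ — note this shadow shrinks like $1/\|\mu(c_j)\|\sim1/\|w_j\|$, so the fixed point must track $\zeta_0$ at that rate, which forces the last factor $\eta_m$ to vary with $j$ and demands that the alignment constants stay uniform as its configuration degenerates. This delicate joint control is the crux, and it rests entirely on Benoist's density of Jordan projections, the alignment estimates for products of loxodromic elements, Lemma \ref{lemma_attracting_fixed_pt}, and the shadow estimate Lemma \ref{sh}(1); once it is in place, the cocycle computation above closes the argument for every $u\in\inte\L$, including those not lying in $\lambda(\Gamma)$.
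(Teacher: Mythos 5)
Your first inclusion and your overall strategy --- append correcting elements of $\Gamma$ to the witnesses $\gamma_j$ and use the shadow lemma to convert Cartan asymptotics into Busemann increments --- are exactly the paper's. But the reverse inclusion as you present it has a genuine gap, and it sits precisely where you locate ``the crux'': you never construct the elements $c_j$. You need, simultaneously, (i) $\mu(c_j)=\i(w_j-t_ju)+O(1)$ with an error bounded uniformly in $j$, and (ii) $\gamma_j^{-1}\xi\in O_r(c_jo,o)$ for a radius $r$ independent of $j$, and you establish neither. For (i) you invoke an unnamed ``standard additivity of the Cartan projection along aligned products''; no such statement appears in the paper, and for a general Zariski dense discrete group a uniform $O(1)$ error for $\mu(\eta_1^{n_1}\cdots\eta_m^{n_m})$ over all the multi-indices you use is a nontrivial proximality estimate that would itself need proof --- all the more so once you let the last factor vary with $j$. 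For (ii) your geometric picture is off: by Lemma \ref{sh}(2) the shadows $O_r(c_jo,o)$ do not shrink like $1/\|\mu(c_j)\|$; they stabilize to $O_r(\eta,o)$, where $\eta$ is the limit of the directions $[\kappa_1(c_j)]$, which for such a product is governed by the attracting point of the \emph{first} factor, not the repelling point of the last. So what you actually need is that $\zeta_0$ lie in a fixed shadow $O_R(y_{\eta_1},o)$, i.e.\ uniform transversality of $\zeta_0$ against the attracting direction of $c_j$; steering $y_{c_j^{-1}}$ toward $\zeta_0$ is neither necessary nor, absent uniform control of the position of the axis of $c_j$ relative to $o$, sufficient.

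The paper's proof is engineered to avoid exactly this. Instead of one correcting element per $j$, it appends high powers of finitely many \emph{fixed} loxodromic elements, $\tilde\gamma_j=\gamma_jg_1^{k_{1,j}}\cdots g_r^{k_{r,j}}$ with $\lambda(g_\ell^{-1})=w_\ell$ chosen so that $v_j-\sum_\ell k_{\ell,j}w_\ell$ lies within $O(1)$ of $\br_+u$, and then \emph{telescopes} the Busemann cocycle, writing $\beta_\xi(e,\tilde\gamma_j)=\beta_\xi(e,\gamma_j)-\sum_\ell\beta_{h_{\ell-1}^{-1}\gamma_j^{-1}\xi}(g_\ell^{k_\ell},e)$. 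Each summand involves only a power of a single loxodromic element, for which $\|\mu(g_\ell^{-k})-k\lambda(g_\ell^{-1})\|\le C_\ell$ is elementary, and the uniform shadow condition is arranged once and for all by the nested neighborhoods $U_1,\dots,U_r$ of Lemma \ref{lemma_attracting_fixed_pt}, the contraction $g_\ell^{-k}U_\ell\subset U_{\ell+1}$ delivering each intermediate point $h_{\ell-1}^{-1}\gamma_j^{-1}\xi$ into the fixed shadow $O_{R_\ell+1}(g_\ell^{k_\ell}o,o)$. If you reorganize your argument along these lines, the ``delicate joint control'' you flag disappears and no product formula for Cartan projections is needed. (Your treatment of the exceptional direction is actually slightly cleaner than the paper's, which handles $u\in\br_+v_0$ by a separate bootstrapping step; but that is a minor point next to the missing construction.)
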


\begin{proof}
 Let $ \xi \in \La_h $. By definition, there is a sequence
 $ \gamma_j \in \Gamma $  satisfying 
 $v_j:=\beta_\xi(e,\gamma_j)\to \infty$ with the sequence
 $\|v_j\|^{-1}v_j$ converging to some point $v_0\in \inte \L$. By passing to a subsequence, we may assume that $\ga_j^{-1} \xi$ converges to some $\xi_0\in \F$.
 
 Let $u\in \inte\L$. 
 We claim that $\xi\in \La_h(u)$.
 We first consider the case $u\not\in \br_+ v_0$. Let $r:=\op{rank} G-1\ge 0$.
 Since $\cup_{\ga\in \Ga}\br_+\lambda(\ga)$ is dense
 in $\L$, there exist $w_1, \cdots, w_r\in \lambda(\Gamma)$ such that $v_0$ belongs to the interior of the convex cone spanned by $u, w_1, \cdots, w_r$, so that
 $$v_0= c_0 u+ \sum_{\ell=1}^r c_\ell w_\ell $$
for some positive constants  $c_0,\cdots, c_\ell$.

Since $\|v_j\|^{-1}v_j \to v_0$, we may assume, by passing to a subsequence, that for each $j\ge 1$, we have
\be\label{uuu} \|v_j\|^{-1} v_j = c_{0,j} u +\sum_{\ell =1}^r c_{\ell,j} w_\ell  \ee
for some positive $c_{\ell,j}$, $\ell=0, \cdots, r$.
  Note that for each $0\le \ell \le r$, $c_{\ell ,j}\to c_\ell $ as $j\to \infty$.
 
By \Cref{lemma_attracting_fixed_pt}, we can find a loxodromic element $g_1\in \Ga$ and a neighborhood $U_1$
of $\xi_0$ such that $\lambda(g_1^{-1})=w_1$, $\{y_{g_1}\}\times  U_1\subset \F^{(2)}$ and
$g_1^{-k}U_1\to y_{g_1^{-1}}$ uniformly. Applying \Cref{lemma_attracting_fixed_pt} once more, we can find $ g_2 \in \Gamma $ satisfying $ \lambda(g_2^{-1}) = w_2 $ and a neighborhood $ U_2 \subset \Furst $ of $ y_{g_1^{-1}} $ satisfying $ \{y_{g_2} \}\times U_2 \subset \Furst^{(2)} $ and that $ g_2^{-k}U_2 \to y_{g_2^{-1}} $ uniformly.

Continuing inductively, we get elements $ g_1,...,g_r \in \Gamma $ and  open sets $ U_1,...,U_r \subset \Furst $ satisfying that	for all $ \ell  = 1,...,r $,
	\begin{enumerate}
		\item $ {w}_\ell=\lambda(g_\ell^{-1}) $;
		\item $ y_{g_{\ell-1}^{-1}} \in U_{\ell} $; 
		\item $ g_\ell^{-k}U_\ell \to y_{g_\ell^{-1}} $ uniformly; and
		\item $ \{ y_{g_\ell} \}\times U_\ell $ is a relatively compact subset of
		$ \Furst^{(2)} $.
	\end{enumerate}

We set $\xi_\ell:=y_{g_\ell^{-1}}$ for each $1\le \ell\le r$; so $U_{\ell}$ is a neighborhood of $\xi_{\ell-1}$ for each $1\le \ell\le r$.

Since $\cal Q_{\eta_0}:=\{\eta\in \F: (\eta_0, \eta)\in \F^{(2)}\}=\bigcup_{R>0} O_R (\eta_0, o)$ for any $\eta_0\in \F$ and
 $U_\ell\subset \cal Q_{y_{g_\ell}}$
is a relatively compact subset of $\Furst^{(2)} $, there exists $ R_\ell>0 $ such that $ U_\ell \subset O_{R_\ell}(y_{g_\ell},o) $. Since $g_\ell^ko $ converges to
$y_{g_\ell}$ as $k\to +\infty$,
by Lemma \ref{sh}(2), 
\be\label{or} O_{R_\ell}(y_{g_\ell}o,o) \subset O_{R_\ell+1}(g_\ell^k o,o) \ee  for all sufficiently large $k>1$.
	\medskip
	
	For each $1\le \ell\le r$ and $j\ge 1$, let $k_{\ell,j}$ be the largest integer smaller than $c_{\ell, j}\|v_j\|$. As $\|v_j\|\to \infty$, and $c_{\ell,j}\to c_\ell$, we have $k_{\ell, j}\to \infty$ as $j\to \infty$.
	By the uniform contraction  $ g_\ell^{-k}U_i \to \xi_\ell$, 
	there exists $j_0>1$ such that for all $j\ge j_0$,  
	\be\label{gj} \gamma_{j}^{-1}\xi \in U_1, \quad  g_\ell^{-k_{\ell,j}}U_\ell \subseteq U_{\ell+1} ,\quad  \text{and} \quad
	U_\ell \subset O_{R_\ell+1}(g_\ell^{k_\ell, j}o,o) \ee
		for all $ \ell=1,...,r $.

For each $j\ge j_0$, we now set
	\[ 	\tilde{\gamma}_j:=\gamma_j g_1^{k_{1,j}} g_2^{k_{2,j}} \cdots g_r^{k_{r,j}} \in \Gamma. \]

We claim that $\beta_\xi(e,\tilde{\gamma}_j)\to \infty$ as $j\to \infty$ and that
	\begin{equation} \label{beta} \sup_{j\ge j_0}
	\left\|\beta_\xi(e,\tilde{\gamma}_j)-\br_+ u \right\|<\infty ;\end{equation}
this proves that $\xi$ is $u$-horospherical.

Fix $ j\ge j_0 $ and for each $1\le \ell\le r$, let $ k_\ell:=k_{\ell,j} $, $b_\ell :=c_{\ell,j}\|v_j\|$, and set
	\[ h_\ell = g_1^{k_1} g_2^{k_2} \cdots g_\ell^{k_\ell}, \]
	and $ g_0=e $.
	The cocycle property of the Busemann function gives that
	\begin{equation}	\beta_\xi(e,\tilde{\gamma}_j) =\beta_\xi(e,\gamma_j) -\sum_{\ell=1}^{r} \beta_{h_{\ell-1}^{-1}\gamma_j^{-1}\xi}(g^{k_\ell}_\ell, e)  .\label{eq_Busemann_vj_as_sum}
	\end{equation}

	By \eqref{gj}, $\ga_j^{-1}\xi\in U_1$ and
for each $1\le \ell \le r$,
	$$h_{\ell -1}^{-1}\ga_j^{-1} \xi \in 
	g_\ell^{-k_\ell}\cdots g_1^{-k_1}U_1\subset U_{\ell+1}\subset O_{R_\ell +1}(g_\ell^{k_\ell}o, o ).$$
	
	Hence by Lemma \ref{sh}(1), there exists $\kappa\ge 1$
	such that for each $1\le \ell \le r$
$$	\| \beta_{h_{\ell-1}^{-1}\gamma_j^{-1}\xi}(g^{k_\ell}_\ell, e) 
	-\mu(g^{-k_\ell}_\ell)\|\le \kappa (R_{\ell}+1).$$

Note that for some $C_\ell>0$, $\|\mu(g_\ell^{-k}) -k
\lambda(g_\ell^{-1})\|\le C_\ell$ for all $k\ge 1$.
Since $\lambda(g_\ell^{-1})=w_\ell$, we get

$$	\| \beta_{h_{\ell-1}^{-1}\gamma_j^{-1}\xi}(g^{k_\ell}_\ell, e) 
	-k_\ell w_\ell \|\le \kappa (R_{\ell}+1)+ C_\ell .$$
	
Therefore by \eqref{eq_Busemann_vj_as_sum},
we obtain
$$\|\beta_\xi(e, \tilde \ga_j) -(v_j -\sum_{\ell=1}^r
k_\ell w_\ell ) \| \le \kappa \sum_{\ell=1}^r  (R_\ell +  C_\ell+1) .$$

By \eqref{uuu}, we have
$$c_{0,j} \|v_j\| u =  v_j -\sum_{\ell=1}^{r}  b_\ell  w_\ell .$$
Since $|b_\ell  -k_\ell|\le 1$ and $c_{0,j}>0$,
we deduce that for all $j\ge j_0$,
\begin{align*}
&\|\beta_\xi(e, \tilde \ga_j)-\br_+ u\| \le
\left\| \beta_\xi(e, \tilde \ga_j)-  c_{0,j} \|v_j\|\cdot  u \right\| \\
&\le \left\| \beta_\xi(e, \tilde \ga_j)-  (v_j -\sum_{\ell=1}^r
k_\ell w_\ell ) \right\| +   \sum_{\ell=1}^r \left\| 
k_\ell w_\ell  -b_\ell w_\ell \right\|  
\\&\le \kappa \sum_{\ell=1}^r  (R_\ell +  C_\ell +\|w_\ell\| +1) .
\end{align*}

This proves $\eqref{beta}$, and consequently $\xi$ is $u$-horospherical for any $u\notin \br_+ v_0.$
To show that $\xi$ is $v_0$-horospherical,
fix any $u\notin \br_+v_0$ and $\tilde \ga_j\in \G$ be a sequence as in \eqref{beta} associated to $u$. If we set $\tilde v_j=\beta_\xi (e,
\tilde \ga_j)$, then $\|\tilde v_j\|^{-1} \tilde v_j$ converges to
a unit vector in $\inte \L$ proportional to $u$.
Therefore by repeating the same argument only now switching the roles of $v_0$ and $u$, we prove that $\xi$ is $v_0$-horospherical as well. This completes the proof.
\end{proof}

We may now prove \cref{NMhoro}:

\begin{proof}[Proof of \cref{NMhoro}] 
Let $g\in G$ be such that $\xi=g^+\in \La$ is a horospherical limit point.
Set $Y:=\overline{[g]NM}$. We claim that $Y=\cal E$.
By Benoist \cite{Benoist}, the group generated by
$\lambda(\Ga)\cap \inte \L$ is dense in $\fa$.
 Hence for every $ \varepsilon > 0 $ there exist loxodromic elements $ \gamma_1,...,\gamma_{q} \in \Gamma $ such that
	\[ \lambda(\gamma_1),...,\lambda(\gamma_{q}) \in \mathrm{Int}\L \]
	and the group  $\z\lambda(\gamma_1)+\cdots +\z \lambda(\gamma_{q})$ is an $\e$-net in $\fa$, i.e., its $\e$-neighborhood covers all $\fa$.	Denote $u_i = \lambda(\gamma_i) $ for $ i=1,...,q $. By \Cref{prop_horo_to_tau_horo}, the point $ \xi $ is $u_1 $-horospherical. By \Cref{prop_tau_horo_to_periodic}, there exists a $ u_1 $-periodic point $x_1 \in \E $ contained in $ Y $, set
	\[ Y_1: = \overline{x_1 NM} \subset Y. \]

By Lemma \ref{hor}, $x_1^+$ is $u_1$-horospherical; in particular, it is a horospherical limit point. Therefore
 we can inductively find
a $u_i$-periodic
	point $ x_i $ in $ Y_{i-1}=\overline{x_{i-1} NM} $ 	for each $2\le i\le q$. By periodicity $x_i (\exp {u_i}) M=x_i M$, and hence
	$Y_i \exp {\mathbb Z u_i} = Y_i$ for each $1\le i\le q$. Therefore we obtain 
	$$Y\supset Y_1\, {\exp {\z u_1}}\supset Y_2\, {\exp ({\z u_1+\z u_2})}\supset \cdots \supset Y_q\, {\exp {(\sum_{i=1}^q \z u_i})}.$$
	
	Recalling the dependence of $Y_q$ and $\sum_{i=1}^q \z u_i$ on $\e$,
	set $$Z_\e:= Y_q MN \exp (\sum_{i=1}^q \z u_i) \subset Y.$$
Since $MN \exp (\sum_{i=1}^q \z u_i)$ is an $\e$-net of $P$ and
$\E$ is $P$-minimal, $Z_\e$ is a $2\e$-net of $\E$ for all $\e>0$.
Since $Y$ contains a $2\e$-net of $\E$ for all $\e>0$ and $Y$ is closed,
it follows that $Y=\E$.
	
	For the other direction, suppose that $[g]NM$ is dense in $\E$ for $g\in G$.
 Choose any $u\in \inte \L$ and a closed cone $\cal C
 \subset \inte \L\cup \{0\}$ which contains $u$. Then $\cal H_\xi=gN(\exp \cal C )(o)$ is a $\Ga$-tight horoball. Let $t>1$.
 Since $ga_{-2t u}\in \cal E$, there exist $\ga_i\in \Ga$, $n_i\in N$, $m_i\in M$ and $q_i\to e$ in $G$ such that
 for all $i\ge 1$,
 $\ga_i g n_i m_i q_i= g a_{-2t u}$. Since $d(\ga_i^{-1}g, gn_i m_ia_{2t u})\le d(q_i a_{2t u},a_{2t u})\to 0 $
as $i\to \infty$,  it follows that for all sufficiently large $i\ge 1$, $\ga_i^{-1} go \in \cal H_\xi (t)$. Hence $g^+$ is a horospherical limit point by Definition \ref{dhoro}.
\end{proof}

\section{Topological mixing and directional limit points}

There is a close connection between denseness of $ N $-orbits and the topological mixing of one-parameter diagonal flows with direction in $ \inte \L $. This connection allows us to make use of recent topological mixing results by Chow-Sarkar \cite{CS}: recall the notation $\Omega_0$ from \eqref{ommm}.
\begin{thm}\cite{CS} \label{cs} For any $u\in \inte \L$, 
	$\{a_{tu} : t\in \br\}$ is topologically mixing on $\Omega_0$, i.e., for any open subsets $\cal O_1, \cal O_2$ of $\Ga\ba G$ intersecting $\Omega_0$,
	$$\cal O_1\exp tu \cap \cal O_2\ne \emptyset \quad\text{ for all large $|t|\gg 1$}.$$ 
\end{thm}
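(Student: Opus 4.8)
The plan is to deduce topological mixing of the flow $a_{tu}=\exp(tu)$ on $\Omega_0$ from two facts already available in the paper: the $\Ga$-minimality of $\La$, which controls the forward/backward endpoints of an $a_{tu}$-orbit, and Benoist's density theorem $\fa=\overline{\langle\lambda(\Ga)\cap\inte\L\rangle}$, which controls the $\fa$-valued return displacements and hence the \emph{times} attainable in the direction $u$. First I would install local product coordinates adapted to the flow. Using the Hopf-type parametrization $gM\mapsto(g^+,g^-,\beta_{g^-}(o,go))\in\F^{(2)}\times\fa$, a neighborhood of $x=[g]\in\Omega_0$ splits into an expanding $\check N$-direction, a contracting $N$-direction, and a neutral $AM$-direction; under $a_{tu}$ with $t\to+\infty$ and $u\in\inte\fa^+$ the subgroup $N$ is stable and $\check N$ is unstable, since $\op{Ad}(a_{-tu})$ acts by $e^{\mp t\alpha(u)}$ on positive, resp.\ negative, root spaces. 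In these coordinates topological mixing is equivalent to the shadowing statement: given $x_1,x_2\in\Omega_0$ and $\e>0$, for every sufficiently large $t$ there is a point $\e$-close to $x_1$ whose image under $a_{tu}$ is $\e$-close to $x_2$ (for open $\cal O_1,\cal O_2$ meeting $\Omega_0$, pick base points $x_i\in\cal O_i\cap\Omega_0$ and apply this).

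Next I would match endpoints. Since $x_1,x_2\in\Omega_0$, all four endpoints $x_1^{\pm},x_2^{\pm}$ lie in $\La$, and because $\La$ is the unique $\Ga$-minimal subset of $\F$, the $\Ga$-action on $\La$ is minimal. Via \Cref{lemma_attracting_fixed_pt} one can move a backward endpoint arbitrarily close to a prescribed point of $\La$ while keeping the opposite endpoint in general position in $\F^{(2)}$. This produces an element $\eta\in\Ga$ and an $\fa$-displacement realizing an approximate connection from a $\check N$-translate of $x_1$ to an $N$-translate of $x_2$, thereby reducing the whole problem to achieving a prescribed \emph{neutral} $A$-displacement equal to $tu$ up to bounded error, for all large $t$.

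The crux is then to realize all large times in the fixed direction $u$. I would fix, exactly as in the proof of \Cref{NMhoro}, loxodromic elements $\ga_1,\dots,\ga_q\in\Ga$ with $\lambda(\ga_i)\in\inte\L$ whose Jordan projections generate an $\e$-net subgroup of $\fa$; by \Cref{hor} each associated periodic point supplies, along a nearly closed orbit, a return displacement close to $\mathbb{Z}\,\lambda(\ga_i)$. Concatenating a high power of a periodic element in a direction near $\br_+u$ (to supply the bulk translation $tu$) with bounded words in the $\ga_i$ (to correct the residual displacement to within $\e$ of $tu$), and then shadowing the resulting $\Ga$-orbit segment, yields the connecting point: hyperbolicity of $a_{tu}$ forces the $N$- and $\check N$-errors to contract, while \Cref{sh} bounds the Busemann and Cartan errors in terms of the uniformly bounded shadow radii. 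The $M$- and $P^\circ$-bookkeeping needed to remain in $\Omega_0$ rather than $\Omega$ is handled by tracking the $M$-component of the holonomies, as in the passage from $NM$- to $N$-orbits elsewhere in the paper; the case $t\to-\infty$ follows by symmetry, swapping the roles of $N$ and $\check N$ via the opposition involution $\i$.

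I expect this last step to be the main obstacle: attaining \emph{every} sufficiently large $t$ in the fixed direction $u$, rather than a discrete or one-sided sequence, is the higher-rank analogue of the non-arithmeticity of the length spectrum that separates topological mixing from mere transitivity, and it is precisely here that Zariski density is indispensable, through the density of $\langle\lambda(\Ga)\cap\inte\L\rangle$ in $\fa$. As an alternative I would keep in mind the measure-theoretic route: choosing $\psi\in\fa^*$ tangent to the growth indicator $\psi_\Ga$ at $u$, one forms the $(\Ga,\psi)$-Bowen--Margulis--Sullivan measure $m_\psi$ on $\Ga\ba G$, supported on $\Omega$ with full support on $\Omega_0$, and derives topological mixing from mixing of $m_\psi$ under $a_{tu}$ together with positivity of $m_\psi$ on open sets. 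The difficulty there is that $m_\psi$ need not be finite for a general Zariski dense $\Ga$, so the mixing step requires the full infinite-ergodic machinery of Chow--Sarkar \cite{CS}; for that reason I would favor the direct shadowing argument above, which avoids any finiteness hypothesis.
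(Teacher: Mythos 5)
The paper does not prove this statement at all: Theorem \ref{cs} is imported verbatim from Chow--Sarkar \cite{CS} (with the abelian-$M$ case due to Dang \cite{Dang}), so there is no internal proof to compare yours against. Your sketch does follow the general strategy of that literature --- local product (Hopf) coordinates, a closing/shadowing argument, and Benoist's density of $\overline{\langle\lambda(\Gamma)\cap\inte\L\rangle}$ in $\fa$ to control return times --- but as written it has two genuine gaps, and you correctly sense where the first one is.

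First, the step you flag as "the crux" is not actually carried out. Density of the subgroup $\sum_i\z\lambda(\ga_i)$ in $\fa$ gives you an $\e$-net of achievable displacements, but mixing requires hitting an $\e$-neighborhood of $tu$ for \emph{every} large $t$ with correction words whose length (hence whose contribution to the transverse $N$-, $\check N$- and $M$-errors, and whose own $\fa$-displacement) is bounded \emph{uniformly in $t$}. Writing $tu$ as a long word in the $\ga_i$ destroys the shadowing estimates; the actual arguments in \cite{DG,Dang,CS} need a careful decomposition of $tu$ into one large power of a single periodic element plus a uniformly bounded corrector, together with transversality of the chained fixed points, and none of that is present in your outline. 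Second, and more seriously for the statement as posed, mixing on the $P^\circ$-minimal set $\Omega_0$ (rather than on $\Omega$) requires controlling the $M$-valued holonomy of the periodic orbits you concatenate; when $M$ is non-abelian this needs a density statement for the joint $(\fa\times M)$-valued Jordan projection, which does not follow from Benoist's theorem and is precisely the new input of \cite{CS} beyond \cite{Dang}. Your proposal to handle this "as in the passage from $NM$- to $N$-orbits elsewhere in the paper" is circular: that passage (Theorem \ref{mhoro} via Corollary \ref{cmix}) itself invokes Theorem \ref{cs}. Your alternative measure-theoretic route has the finiteness obstruction you already identify, so it does not rescue the argument.
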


\noindent The above theorem was predated by a result of Dang \cite{Dang} in the case where $ M $ is abelian.

\subsection*{$N$-orbits based at directional limit points along $\inte \L$}
\begin{Def} 	For $u\in \inte \fa^+$, denote by $\La_u$ the set of all $u$-directional limit points, i.e., $\xi\in \La_u$ if and only if
	$\limsup_{t\to +\infty} \Ga g \exp (tu)\ne \emptyset$ for some (and hence any) $g\in G$
	with $gP=\xi$.
\end{Def} 
It is easy to see that $\La_u\subset \La$
for $u\in \inte \fa^+$.
\begin{prop}\label{pmix}
If $[g]\in \cal E_0$ satisfies $g^+\in \La_u$ for some $u\in \inte\L$, then $$\overline{[g]N}=\E_0.$$
\end{prop}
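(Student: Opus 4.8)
The plan is to exploit the mixing theorem (\Cref{cs}) to spread a stable horosphere, after first locating a well-behaved recurrent point of $\Omega_0$ inside $\overline{[g]N}$. Since $g^+\in\La_u$ with $u\in\inte\L$, Lemma \ref{st} gives $\La_u\subseteq\La_h$, so $g^+\in\La_h$, and by \Cref{allu} we have $\La_h=\La_h(v)$ for every $v\in\inte\L$. Choosing $v\in\lambda(\Ga)\cap\inte\L$ (possible, as $\lambda(\Ga)\cap\inte\L$ is dense in $\inte\L$), the point $g^+$ is $v$-horospherical, so \Cref{prop_tau_horo_to_periodic} places a $v$-periodic point $x_1$ in $\overline{[g]N}$. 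Any $v$-periodic point lies in $\Omega_0$, since its two endpoints are the attracting and repelling fixed points of a loxodromic element of $\Ga$ and hence lie in $\La$. As $\overline{[g]N}$ is closed and $N$-invariant it contains $\overline{x_1 N}$, so it suffices to prove $\overline{x_1 N}\supseteq\Omega_0$ for an arbitrary $x_1\in\Omega_0$.

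For this I would pass to the time-reversed flow $b_s:=a_{-sv}$, which is again mixing on $\Omega_0$ by \Cref{cs} and for which $N$ is the \emph{expanding} horosphere; it then suffices to show, for a mixing flow $b_s$ on $\Omega_0$ with expanding horosphere $W$ and any $x_1\in\Omega_0$, that $\overline{x_1 W}\supseteq\Omega_0$. The idea is to approximate each \emph{recurrent} target $q\in\Omega_0$. Given $\e>0$, topological mixing produces, for all large $T$, a point $p$ in the $\e$-neighborhood of $x_1$ with $b_T(p)$ in the $\e$-neighborhood of $q$; writing $p=x_1\,w\,c$ in local product coordinates with $w\in W$ and $c$ small in the complementary (contracting and central) directions, the factor $b_{-T}c\,b_T$ contracts as $T\to\infty$, so the honest horosphere point $x_1 w\in x_1 W$ satisfies that $b_T(x_1 w)$ lies in the $O(\e)$-neighborhood of $q$; equivalently, $x_1 w$ lies within $O(\e)$ of $b_{-T}(q)$, uniformly over large $T$. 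Choosing return times $T_k\to\infty$ with $b_{-T_k}(q)\to q$ then yields $q\in\overline{x_1 W}$. Since the recurrent points are dense in $\Omega_0$ — for instance the periodic points, whose attracting endpoints $y_\ga$ are dense in $\La$ — we conclude $\overline{x_1 W}\supseteq\Omega_0$, and hence $\Omega_0\subseteq\overline{x_1 N}\subseteq\overline{[g]N}$.

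To upgrade $\Omega_0$ to all of $\E_0$, note that $\overline{[g]N}$ is closed and $N$-invariant, so it contains $\overline{\Omega_0 N}$. Fix any $q_0\in\Omega_0$; as $\Omega_0$ is invariant under $\mc A\subset\pc$ (both factors preserve $\E_0$ and fix the two endpoints, hence preserve $\Omega$), we have $q_0\pc=(q_0\,\mc A)N\subseteq\Omega_0 N$. By $\pc$-minimality of $\E_0$ we get $\overline{q_0\pc}=\E_0$, whence $\E_0\subseteq\overline{\Omega_0 N}\subseteq\overline{[g]N}$. The reverse inclusion is immediate since $[g]\in\E_0$ and $\E_0$ is closed and $N$-invariant, so $\overline{[g]N}=\E_0$.

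The main obstacle is the thickening in the second paragraph: making the product-coordinate bookkeeping precise — discarding only the contracting and central coordinates while matching the expanding coordinate by a genuine element of $W$ — and keeping the error estimates uniform over the large mixing times $T$, so that recurrence of the target may legitimately be invoked. By contrast, the density inputs (of $\lambda(\Ga)\cap\inte\L$ in $\inte\L$ and of periodic points in $\Omega_0$) and the final $\pc$-minimality step are routine consequences of Zariski density.
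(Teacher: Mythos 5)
There is a genuine gap, and it sits exactly where you flag the ``main obstacle'': the second paragraph. The intermediate statement you reduce to --- that $\overline{x_1 W}\supseteq\Omega_0$ for an \emph{arbitrary} $x_1\in\Omega_0$ --- is false, so no amount of product-coordinate bookkeeping can rescue the argument as organized. Indeed, such a containment would force $\overline{x_1N}\supseteq \Omega_0 N=\E_0$ and hence, by Theorem \ref{mhoro}, $x_1^+\in\La_h$; but Proposition \ref{thp} and Corollary \ref{th2} produce Zariski dense Anosov examples with a loxodromic $\ga$ whose Jordan projection lies in $\partial\L$, and the associated periodic point lies in $\Omega$ (hence, after a right $M$-translate which does not change the forward endpoint, in $\Omega_0$) while that endpoint is not horospherical. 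The precise step that breaks is the ``equivalently'': from $x_1w\,b_T=q\delta$ with $\|\delta\|=O(\e)$ you get $x_1w=b_{-T}(q)\cdot\bigl(b_T\delta b_{-T}\bigr)$, and $b_T\delta b_{-T}=a_{-Tv}\,\delta\, a_{Tv}$ expands the $\check N$-component of $\delta$ exponentially in $T$. Mixing gives no control whatsoever on that component of the error, so the distance from $x_1w$ to $b_{-T}(q)$ is not $O(\e)$ uniformly in large $T$, and recurrence of the target $q$ cannot be invoked. What your estimate actually yields is $q\in\overline{\bigcup_T x_1N\,b_T}$, the closure of the $N$-saturation of a flow orbit, not of a single $N$-leaf.

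The missing input is recurrence of the \emph{source} along the flow direction, which is precisely the hypothesis $g^+\in\La_u$ that your second paragraph never uses. The paper's proof uses it at the outset: writing $\ga_i g a_{t_iu}=hq_i$ with $q_i\to e$ gives the exact identity of leaves $[g]N=[h]q_iN a_{-t_iu}$, so the mixing argument is run from the \emph{fixed} accumulation point $[h]$ against $\cal O_1 a_{t_iu}$, and the relevant conjugation $a_{t_iu}Va_{-t_iu}$ of a neighborhood $V\subset\check P$ then goes in the favorable (contracting) direction. Your first and third paragraphs are fine --- the passage $\La_u\subset\La_h=\La_h(v)$ via Lemma \ref{st} and \Cref{allu}, the $v$-periodic point from \Cref{prop_tau_horo_to_periodic}, and the upgrade from $\Omega_0$ to $\E_0$ --- but the reduction to a periodic point also discards the only usable hypothesis: for the periodic point $x_1$ one must still exploit that $x_1a_{kv}$ returns to $x_1M$, i.e.\ that $x_1^+\in\La_v$, which is exactly the content of Corollary \ref{cmix} and again requires the directional recurrence your mixing step omits.
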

\begin{proof}
	Since $\Omega_0 N=\E_0$, we may assume without loss of generality that $x=[g]\in \Omega_0$.	There exist $\ga_i\in \G$ and $t_i\to+\infty $ such that $\ga_i g a_{t_iu} $ 	converges to some $h\in G$. In particular, $x \exp (t_i u)\to [h]$. Since
	$x a_{t_iu}\in \Omega_0$ and $\Omega_0$ is $ A $-invariant and closed,
	we have $[h]\in \Omega_0$.
	We write $\ga_i g a_{t_iu}=hq_i$ where $q_i\to e$ in $G$. 
	Therefore $xN=[h] q_i N a_{-t_iu}$ for all $i\ge 1$. Let $\cal O\subset \Ga\ba G$ be any open subset intersecting $\Omega_0$. It suffices to show that $xN\cap \cal O\ne \emptyset$. Let $\cal O_1$ be an open subset intersecting $\Omega_0$ and
	$V\subset \check{P}$ be an open symmetric neighborhood of $e$ such that $\cal O_1 V\subset \cal O$.
	
	Since $q_i\to e$ and $NV$ is an open neighborhood of $e$ in $G$, there exists an open neighborhood, say, $U$ of $e$ in $G$ and $i_0$ such that $U\subset q_i NV$ for all $i\ge i_0$.
	By Theorem \ref{cs}, we can choose $i>i_0$ such that
	$[h] U\cap \cal O_1 a_{t_iu}\ne \emptyset$.
	It follows that $[h] q_i NV a_{-t_iu} \cap \cal O_1\ne \emptyset $. Since
	$V\subset a_{-t_iu} V a_{t_iu}$ as $u\in \fa^+$, 
	we have
	$$[h] q_i NV a_{-t_iu} \cap \cal O_1 \subset  [h] q_i N a_{-t_iu}
	V \cap \cal O_1 .$$
	Since $V=V^{-1}$, we get
	$[h] q_i N a_{-t_iu} \cap \cal O_1 V \ne \emptyset$.
	Therefore $xN\cap \cal O\ne \emptyset$, as desired.
\end{proof}

This immediately implies:
\begin{cor}\label{cmix}  If $[g]\in \Omega_0$ is $u$-periodic for some $u\in \inte\L$, then $$\overline{[g]N}=\E_0.$$
\end{cor}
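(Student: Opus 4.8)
The plan is to deduce this directly from Proposition \ref{pmix} by checking that a $u$-periodic point automatically lies over a $u$-directional limit point. Since $\Omega_0\subset \E_0$ by definition, the hypotheses of Proposition \ref{pmix} will be met once we verify that $g^+\in \La_u$, and then that proposition gives $\overline{[g]N}=\E_0$ at once.

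First I would unwind the definition of $u$-periodicity. There exists $m_0\in M$ with $[g]a_u=[g]m_0$, and iterating this relation gives $[g]a_{nu}=[g]m_0^n$ for every $n\ge 1$. Because $M$ is compact, the sequence $(m_0^n)_{n\ge 1}$ has a convergent subsequence, say $m_0^{n_k}\to m_*\in M$; hence $[g]\exp(n_k u)=[g]m_0^{n_k}\to [g]m_*$ in $\Ga\ba G$. Thus the forward orbit $\{[g]\exp(tu):t>0\}$ has a nonempty set of accumulation points, which is exactly the condition $\limsup_{t\to +\infty}\Ga g\exp(tu)\ne\emptyset$ defining $\La_u$. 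Therefore $g^+\in \La_u$.

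Since $u\in\inte\L$ and $g^+\in\La_u$, Proposition \ref{pmix} applies and yields $\overline{[g]N}=\E_0$. I do not anticipate any genuine obstacle here: the single substantive point is the passage from the algebraic periodicity relation $[g]a_u=[g]m_0$ to the topological recurrence defining $\La_u$, and this is immediate from the compactness of $M$.
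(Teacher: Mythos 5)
Your proposal is correct and follows exactly the paper's own argument: iterate the periodicity relation to get $[g]\exp(ku)=[g]m_0^k$, use compactness of $M$ to conclude $g^+\in\La_u$, and then apply Proposition \ref{pmix}. The paper's proof is just a more condensed version of the same two steps.
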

\begin{proof}
	Since $ [g] (\exp ku) =[g]m_0^k$ for any integer $k$ and $M$ is compact, we have $g^+\in \La_u$.
	Therefore the claim follows from Proposition \ref{pmix}.
\end{proof}

We may now conclude our main theorem in its fullest form:

\begin{thm}\label{mhoro} Let $[g]\in \E_0$. The following are equivalent:
	\begin{enumerate}
		\item  $g^+\in \La$ is a horospherical limit point;
		\item $[g]N$ is dense in $\E_0$;
		\item $[g]NM$ is dense in $\E$.
	\end{enumerate}
\end{thm}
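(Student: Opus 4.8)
The plan is to prove \cref{mhoro} by establishing the cycle of implications $(2)\Rightarrow(3)\Rightarrow(1)\Rightarrow(2)$, leaning heavily on the machinery already assembled. The equivalence $(1)\Leftrightarrow(3)$ is exactly \cref{NMhoro}, so in principle it suffices to connect the $N$-orbit statement $(2)$ to these. I would therefore focus the argument on showing $(1)\Rightarrow(2)$, since $(2)\Rightarrow(3)$ is nearly immediate: if $[g]N$ is dense in $\E_0$, then $[g]NM$ is dense in $\E_0 M$, and since $\E$ is the union of the finitely many $P^\circ$-minimal sets $\E_0 m$ (each of the form $\E_0 m$ for $m\in M$), right-multiplication by $M$ sweeps $\overline{\E_0 M}$ onto all of $\E$, giving denseness of $[g]NM$ in $\E$.

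The heart of the matter is $(1)\Rightarrow(2)$, and here I would mimic the proof of \cref{NMhoro} but replace the role of $NM$-density with the directional mixing input of \cref{pmix} and \cref{cmix}. Concretely, suppose $g^+\in\La$ is a horospherical limit point. Fix any $u\in\inte\L$ with $u=\lambda(\ga_0)\in\lambda(\Ga)$ for some loxodromic $\ga_0\in\Ga$ (such $u$ exist by density of $\cup_{\ga}\br_+\lambda(\ga)$ in $\L$). By \cref{prop_horo_to_tau_horo}, $g^+$ is $u$-horospherical, and then \cref{prop_tau_horo_to_periodic} produces a $u$-periodic point $x_1\in\overline{[g]N}$. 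The key upgrade is that $u\in\inte\L$, so I may invoke \cref{cmix}: since $x_1\in\Omega_0$ is $u$-periodic with $u\in\inte\L$, we get $\overline{x_1 N}=\E_0$. As $\overline{x_1 N}\subset\overline{[g]N}$, this forces $\overline{[g]N}=\E_0$, which is precisely $(2)$.

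The step that requires care, and which I expect to be the main obstacle, is ensuring that the $u$-periodic point $x_1$ produced by \cref{prop_tau_horo_to_periodic} actually lands inside $\Omega_0$ (so that \cref{cmix} applies) rather than merely inside $\E$. The cleanest route is to note that a $u$-periodic point with $u\in\inte\L$ is automatically $u$-directional (as in the proof of \cref{cmix}, since $[g]\exp(ku)=[g]m_0^k$ stays in a compact set), so its forward $A$-orbit returns, placing both $x_1^+$ and $x_1^-$ in $\La$; combined with $[g]\in\E_0$ I must also verify that the orbit closure argument respects the single $P^\circ$-minimal component $\E_0$. Since $x_1\in\overline{[g]N}$ and $[g]\in\E_0$ with $\E_0$ being $P^\circ$-invariant (in particular $N$-invariant) and closed, the point $x_1$ lies in $\E_0$, and its $u$-periodicity then gives $x_1\in\Omega_0$ via the directional argument. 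This pins down the component correctly and allows \cref{pmix}/\cref{cmix} to close the loop.

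With $(1)\Rightarrow(2)$ and $(2)\Rightarrow(3)$ in hand, the remaining implication $(3)\Rightarrow(1)$ is supplied verbatim by the second half of the proof of \cref{NMhoro} (the $\Ga$-tight horoball construction using $ga_{-2tu}\in\E$), so I would simply cite \cref{NMhoro} to complete the cycle. Thus all three conditions are equivalent.
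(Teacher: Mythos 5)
Your proposal is correct and follows essentially the same route as the paper: $(2)\Rightarrow(3)$ is immediate, $(3)\Rightarrow(1)$ is quoted from Theorem \ref{NMhoro}, and $(1)\Rightarrow(2)$ combines Propositions \ref{prop_horo_to_tau_horo} and \ref{prop_tau_horo_to_periodic} with Corollary \ref{cmix}. Your extra verification that the $u$-periodic point lands in $\Omega_0$ (via $N$-invariance and closedness of $\E_0$, plus $x_1^{\pm}\in\La$ from periodicity) is a detail the paper leaves implicit, and it is handled correctly.
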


\begin{proof}
The implication $(2)\Rightarrow (3)$ is trivial and $ (3) \Rightarrow (1) $ was shown in \Cref{NMhoro}. Hence let us prove $ (1)\Rightarrow (2) $.

Let $x=[g]\in \E_0$. Suppose that $g^+\in \Lambda_h$. Fix any $u\in \lambda(\Gamma)\cap \inte \L_\Ga$. By
Propositions \ref{prop_horo_to_tau_horo} and \ref{prop_tau_horo_to_periodic}, $xN$ contains a $u$-periodic point, say, $x_0$.  Hence by Corollary \ref{cmix},  $\overline{xN}\supset \overline{x_0N}\supset  \Omega_0 N=\E_0$. This proves
$(1)\Rightarrow (2)$. 
\end{proof}

\section{Conical limit points, Minimality and Jordan projection}

A point $\xi\in \cal F$ is called a \emph{conical} limit point of $\Ga$ if there exists a sequence $ u_j \to \infty $ in $ \fa^+ $ such that for some (and hence every) $ g \in G $ with $ \xi = gP $ 
\begin{equation*}
		\limsup_{j\to \infty} \Gamma g a_{u_j} \neq \emptyset.
\end{equation*}
A conical limit point of $\Ga$ is indeed contained in $\La$.
We consider the following restricted notion:
\begin{definition}
	We call $\xi\in \F$ a \emph{strongly conical} limit point of $\Ga$ if there exists a closed cone $ \cal C \subset \inte \L \cup \{0\} $ and a sequence $ u_j \to \infty $ in $ \cal C $ such that for some (and hence every) $ g \in G $ with $ \xi = gP $,
	\begin{equation*}
		\limsup_{j\to \infty} \Gamma g a_{u_j} \neq \emptyset . \end{equation*}
\end{definition}

\begin{Rmk}
   We mention that a conical limit point  defined in \cite{Conze-Guivarch}  for $\Ga<\op{SL}_d(\br)$ coincides with our strongly conical limit point.
\end{Rmk}

\begin{lem}\label{st}
Any strongly conical limit point of $\Ga$ is horospherical.
\end{lem}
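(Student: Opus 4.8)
The plan is to reduce directly to the Busemann characterization of horospherical limit points in \Cref{hd}. Let $\xi = gP$ be a strongly conical limit point. Unravelling the definition, there is a closed cone $\cal C \subset \inte\L \cup \{0\}$, a sequence $u_j \to \infty$ in $\cal C$, and a sequence $\gamma_j \in \Gamma$ such that, after passing to a subsequence, $\gamma_j g a_{u_j} \to h$ for some $h \in G$; write $\gamma_j g a_{u_j} = h q_j$ with $q_j \to e$, so that $\gamma_j g = h q_j a_{-u_j}$. The goal is to show that the sequence $\gamma_j^{-1} \in \Gamma$ witnesses condition \eqref{h1} for $\xi$.

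The heart of the argument is the identity
\[ \beta_\xi(go, \gamma_j^{-1} g o) = u_j + O(1), \]
where the error term is bounded uniformly in $j$. To obtain it I would first apply the $G$-equivariance in \eqref{bus} with $\gamma_j$ to write $\beta_{gP}(go, \gamma_j^{-1} g o) = \beta_{\gamma_j g P}(\gamma_j g o, g o)$, and then use $\gamma_j g = h q_j a_{-u_j}$, which gives $\gamma_j g P = h q_j P$ and $\gamma_j g o = h q_j a_{-u_j} o$. Applying $(h q_j)^{-1}$ via \eqref{bus} and splitting with the cocycle relation, this becomes
\[ \beta_P(a_{-u_j} o, (h q_j)^{-1} g o) = \beta_P(a_{-u_j} o, o) + \beta_P(o, (h q_j)^{-1} g o). \]
The first term equals $u_j$ since $\beta_P(o, a_{-u_j}o) = -u_j$, while the second is bounded because $(h q_j)^{-1} g \to h^{-1} g$ runs through a relatively compact set and the Busemann map is continuous. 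Invoking the base-point independence established just after \Cref{hd}, the same estimate holds with base point $o$: $\beta_\xi(o, \gamma_j^{-1} o) = u_j + O(1)$.

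It remains to check that these vectors lie in a fixed cone inside $\inte\L \cup \{0\}$. Since $\cal C \setminus \{0\}$ is contained in the open set $\inte\L$, compactness of the spherical cross-section of $\cal C$ yields a slightly larger closed cone $\cal C'$ with $\cal C' \subset \inte\L \cup \{0\}$ and $\cal C$ in its interior. Because $\beta_\xi(o, \gamma_j^{-1} o)$ differs from $u_j$ by a uniformly bounded amount while $\|u_j\| \to \infty$, the directions $\beta_\xi(o, \gamma_j^{-1}o)/\|\beta_\xi(o, \gamma_j^{-1}o)\|$ eventually enter $\cal C'$ and the norms tend to infinity; thus $\beta_\xi(o, \gamma_j^{-1}o) \to \infty$ within $\cal C'$, and \eqref{h1} holds for $\gamma_j^{-1}$, so $\xi \in \La_h$ by \Cref{hd}.

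I expect no genuine obstacle here: the proof is a careful bookkeeping of the cocycle identities in \eqref{bus} together with the elementary cone-enlargement, the only delicate point being that the bounded perturbation does not push the direction of $\beta_\xi(o, \gamma_j^{-1}o)$ out of $\inte\L$, which is exactly what the passage from $\cal C$ to $\cal C'$ secures.
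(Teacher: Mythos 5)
Your proof is correct and follows essentially the same route as the paper's: both write $\gamma_j g a_{u_j}=hq_j$ with $q_j\to e$, use the cocycle and equivariance properties of \eqref{bus} to show $\beta_\xi(\cdot,\gamma_j^{-1}\cdot)=u_j+O(1)$, and conclude via the enlargement of $\cal C$ to a closed cone $\cal C'\subset \inte\L\cup\{0\}$ containing $\cal C\smallsetminus\{0\}$ in its interior. The only cosmetic difference is that you compute at base point $go$ and then invoke the base-point independence remark, whereas the paper computes $\beta_{gP}(e,\gamma_j^{-1})$ directly from $\gamma_j^{-1}=ga_{u_j}q_j^{-1}h^{-1}$.
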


\begin{proof}
Suppose that $ \xi = gP $ is strongly conical, that is, there exist
	$\ga_j\in \Ga$ and  $ u_j \to \infty $ in some closed cone $\cal C\subset \inte \L \cup\{0\}$ such that
	$\ga_j g a_{u_j}$ converges to some $h\in G$.
	Write $\ga_j g a_{u_j}=hq_j$ where $q_j\to e$ in $G$.
	Let $\cal C'$ be a closed cone contained in $\inte \L \cup\{0\}$
	whose interior contains $\cal C \smallsetminus \{0\}$.
	
	Then $\ga_j^{-1}=g a_{u_j}q_j^{-1} h^{-1}$ and
	$$\beta_{gP} (e, \ga_j^{-1} ) =\beta_P (g^{-1}, a_{u_j}q_j^{-1} h^{-1})=
	\beta_{P} (g^{-1}, q_j^{-1}h^{-1}) +\beta_P(e, a_{u_j}) .$$
	Since $\beta_P(e, a_{u_j})=u_j$ and $q_j^{-1}h^{-1}$ are uniformly bounded,
	the sequence
	$$\beta_{gP}(e, \ga_j^{-1}) - u_j$$
is	uniformly bounded. Since $u_j\in \cal C$ and $\cal C\subset \inte\cal C'\cup\{0\}$, it follows that $$\beta_{gP}(e, \ga_j^{-1})\in \cal C'$$
	for all sufficiently large $j$. This proves that $\xi\in \La_h$.

\end{proof}

\begin{cor} For any $g\in G$ with strongly conical $g^+\in \cal F$, we have
$$\overline{[g] NM}=\cal E .$$
\end{cor}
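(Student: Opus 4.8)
The plan is to deduce this corollary directly from the two results immediately preceding it, chaining them together so that no new argument is needed. First I would check that the hypothesis of \Cref{NMhoro} is satisfied, namely that $[g]\in \E$. This amounts to verifying $g^+\in \La$. Since the cone $\cal C\subset \inte\L\cup\{0\}$ appearing in the definition of a strongly conical limit point is in particular contained in $\fa^+$, every strongly conical limit point is a fortiori a conical limit point; and a conical limit point is always contained in $\La$. Hence $g^+\in \La$ and $[g]\in \E$, so \Cref{NMhoro} is applicable to $[g]$.

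Next I would apply \Cref{st}, which asserts precisely that any strongly conical limit point of $\Ga$ is horospherical. Thus $g^+\in \La_h$, so condition (1) of \Cref{NMhoro} holds for our point $[g]$. Invoking the equivalence $(1)\Leftrightarrow(2)$ of \Cref{NMhoro} then yields that $[g]NM$ is dense in $\E$, i.e. $\overline{[g]NM}=\cal E$, which is exactly the claimed identity.

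There is no genuine obstacle in this argument: the corollary is a purely formal consequence of \Cref{st} and \Cref{NMhoro}. The only step deserving a moment's attention is the preliminary verification that $[g]\in \E$, since \Cref{NMhoro} is stated only for points of $\E$; but as noted this follows at once from the inclusion of strongly conical limit points in the limit set $\La$.
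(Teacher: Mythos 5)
Your argument is correct and is exactly the intended (implicit) proof: the corollary is the formal concatenation of Lemma~\ref{st} (strongly conical $\Rightarrow$ horospherical) with the equivalence $(1)\Leftrightarrow(2)$ of Theorem~\ref{NMhoro}. Your preliminary check that $g^+\in\La$, hence $[g]\in\E$, via the observation that strongly conical limit points are in particular conical, is a worthwhile detail that the paper leaves unstated.
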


\subsection*{Directionally conical limit points}
If $v\in {\inte \L}$, then clearly $\La_v$ is contained in the horospherical limit set of $\Ga$, and hence any $NM$-orbit based at a point of $\La_v$ is dense in $\cal E$. 
On the other hand, we would like to show in this section that the existence
of a point in $\La_v$ for $v\in \partial \L_\Ga$ implies the existence of a nondense $NM$-orbit in $\cal E$.

The flow $\exp (\br u)$ is said to be topologically transitive on $\Omega/M=\{\Gamma g M : g^{\pm}\in \La\}$ if, for any
open subsets $\cal O_1, \cal O_2$ intersecting $\Omega/M$, there exists a sequence 
$t_n \to +\infty$ such that $\cal O_1\cap \cal O_2 a_{t_n u}\ne \emptyset$.

	We make the following simple observation:
	\begin{lem} For $g\in \Omega$, we have
$$\text{$\overline{gNM}\supset \Omega$\;\; 
if and only if \;\; $\overline {gw_0\check{N}M}\supset \Omega$.}$$
\end{lem}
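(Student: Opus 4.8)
The plan is to exploit that right translation by $w_0$ is a homeomorphism of $\Ga\ba G$ which preserves $\Omega$, together with the fact that conjugation by $w_0$ carries $\check N M$ onto $NM$. The whole statement then reduces to a single symmetry computation: I will show $\overline{gw_0\check N M}=\overline{gNM}\,w_0$, and since $\Omega w_0=\Omega$ this immediately yields the claimed equivalence.

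First I would record two algebraic facts about the representative $w_0\in K$. Since $\op{Ad}_{w_0}\fa^+=-\fa^+$, the opposition involution $\i=-\op{Ad}_{w_0}$ is an involution of $\fa^+$, so $\op{Ad}_{w_0}^2=\i^2=\mathrm{id}$ on $\fa$; hence $w_0^2$ centralizes $A$ and, being in $K$, lies in $M=Z_K(A)$. As $w_0\in N_K(A)$ normalizes $A$, it also normalizes $M=Z_K(A)$, giving $w_0 M w_0^{-1}=M$. Using the definition $\check N=w_0N w_0^{-1}$ and that $w_0^2\in M$ normalizes $N$, I then obtain
\[ w_0\check N w_0^{-1}=w_0^2\,N\,w_0^{-2}=N. \]
Combining the two displays gives $w_0(\check N M)w_0^{-1}=NM$, so that for every $g\in G$,
\[ g\,w_0\,\check N M=g\,\bigl(w_0\check N M\,w_0^{-1}\bigr)\,w_0=g\,NM\,w_0. \]

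Next I would verify that $\Omega$ is invariant under right translation by $w_0$. For $g\in G$ one computes the visual maps $(gw_0)^+=gw_0P=g^-$ and $(gw_0)^-=gw_0^2P=g^+$, the last equality holding because $w_0^2\in M\subset P$. Hence $[gw_0]\in\Omega$ if and only if $g^-\in\La$ and $g^+\in\La$, which is exactly the condition $[g]\in\Omega$. Thus $\Omega w_0=\Omega$, and right-multiplying this identity by $w_0^{-1}$ gives $\Omega=\Omega w_0^{-1}$ as well.

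Finally I would assemble the pieces. The map $R_{w_0}\colon x\mapsto xw_0$ is a homeomorphism of $\Ga\ba G$, hence commutes with closure, so the last display of the second paragraph yields $\overline{gw_0\check N M}=\overline{gNM}\,w_0$. Therefore
\[ \overline{gw_0\check N M}\supset\Omega \iff \overline{gNM}\,w_0\supset\Omega \iff \overline{gNM}\supset\Omega w_0^{-1}=\Omega, \]
which is precisely the asserted equivalence. I expect the only delicate point to be the bookkeeping of the conjugation identities; this rests entirely on the fact that the longest Weyl element is an involution, so that $w_0^2\in M$, and everything else is formal manipulation with the homeomorphism $R_{w_0}$ and the $w_0$-invariance of $\Omega$.
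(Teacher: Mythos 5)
Your proof is correct and follows essentially the same route as the paper's: both rest on the conjugation identity $w_0\check N M w_0^{-1}=NM$ (equivalently $\check N=w_0Nw_0^{-1}$) together with the invariance $\Omega w_0=\Omega$ coming from $(gw_0)^{\pm}=g^{\mp}$, and then transport closures by the homeomorphism $x\mapsto xw_0$. You merely spell out the details ($w_0^2\in M$, $w_0$ normalizes $M$) that the paper leaves implicit.
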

\begin{proof}
We have $\check{N}=w_0 N w_0^{-1}$.
Note that $[g]\in \Om$ if and only if $[gw_0]\in \Om$, since $(g w_0)^{\pm}=g^{\mp}$. So $\Om w_0=\Om$.
Hence
$gNM$ is dense in $\Om$ if and only if $gw_0 \check{N}M w_0^{-1} $
is dense in $\Om$ if and only if
$[g]w_0\check{N}M$ is dense in $\Om w_0=\Om$.
\end{proof}

Since the opposition involution preserves $\L$ and $\lambda(g^{-1})=\op{i}\lambda(g)$ for any loxodromic element, it follows that
	$\lambda(\ga)\in \partial \L$ if and only if $\lambda(\ga^{-1})\in \partial \L$.

\begin{prop}\label{lah}\;
  \begin{enumerate}
      \item 
  If $\La=\La_h$, then
      $\exp (\br v)$ is topologically transitive on
     $\Omega/M$ for any $v\in \inte\fa^+$ such that
     $\La_v\ne \emptyset$.
     \item 
    For any loxodromic element $\ga \in \Ga$ with $\{y_{\ga}, y_{\ga^{-1}}\}\subset \La_h$,
     the flow $\exp (\br \lambda(\ga))$ is topologically transitive on
     $\Omega/M$.   \end{enumerate}  
\end{prop}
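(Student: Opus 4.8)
The plan is to prove both parts by a single mechanism: I produce a point $[x_0]\in\Omega$ whose \emph{stable} horosphere $[x_0]NM$ is dense in $\Omega$, together with a sequence $t_k\to+\infty$ for which $[x_0]a_{t_kv}$ converges in $\Omega/M$ to some $[h_0]\in\Omega$ whose \emph{unstable} horosphere $[h_0]\check NM$ is dense in $\Omega$; here $v$ is the relevant direction, which in both parts lies in $\inte\fa^+$. For part (2) I would take $[x_0]=[h_0]=[g_\ga]$, the $\lambda(\ga)$-periodic point with $g_\ga^+=y_\ga$ and $g_\ga^-=y_{\ga^{-1}}$, and $t_k=k$, so that $[g_\ga]a_{k\lambda(\ga)}M=[g_\ga]M$ is constant. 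Density of $[g_\ga]NM$ follows from $y_\ga\in\La_h$ via \Cref{NMhoro}, while density of $[g_\ga]\check NM$ follows from $y_{\ga^{-1}}=(g_\ga w_0)^+\in\La_h$, by applying \Cref{NMhoro} to $[g_\ga w_0]\in\Omega$ and then the preceding lemma (using $w_0^2\in M$ to absorb the $w_0$-twist).

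For part (1), since $\La$ is Zariski dense it meets the Zariski-open set $\{\eta\in\F:(\xi_0,\eta)\in\F^{(2)}\}$ for any $\xi_0\in\La_v$, so I may choose $g_0$ with $g_0^+=\xi_0$ and $[g_0]\in\Omega$. As $\La_v$ depends only on $g_0P=\xi_0$, the hypothesis $\xi_0\in\La_v\ne\emptyset$ yields $\ga_i\in\Ga$ and $t_i\to+\infty$ with $[g_0]a_{t_iv}\to[h_0]$; since $\Omega$ is closed and $A$-invariant, $[h_0]\in\Omega$. Because $\La=\La_h$, both $[g_0]NM$ and $[h_0]\check NM$ are dense in $\Omega$ by \Cref{NMhoro} and the preceding lemma. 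Thus $[x_0]=[g_0]$ realizes the configuration above.

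Granting this configuration, I argue transitivity as follows. Let $\cal O_1,\cal O_2$ be open sets meeting $\Omega/M$. By density of $[x_0]NM$ pick $n_2\in N$ with $[x_0]n_2M\in\cal O_2$, and choose a small product box $[x_0]n_2(\check V\,B\,V')M\subset\cal O_2$, where $\check V\subset\check N$, $B\subset AM$, $V'\subset N$ are neighborhoods of $e$. Writing $a=a_{t_kv}$ and commuting $a$ to the left through each factor (using that $AM$ commutes with $A$), a point of the box flows to $[x_0]a\,(a^{-1}n_2a)(a^{-1}\check v a)\,b\,(a^{-1}v'a)$. Since $v\in\inte\fa^+$, the subgroup $N$ contracts and $\check N$ expands under forward conjugation, so $a^{-1}n_2a\to e$ and $a^{-1}v'a\to e$ uniformly while $a^{-1}\check Va=a_{-t_kv}\check Va_{t_kv}$ exhausts every compact subset of $\check N$ as $k\to\infty$. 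As $[x_0]a_{t_kv}\to[h_0]$ in $\Omega/M$, the flowed box is approximated, with error tending to $0$, by $[h_0](a_{-t_kv}\check Va_{t_kv})M$. Choosing $\check n_1$ with $[h_0]\check n_1M\in\cal O_1$ (possible by density of $[h_0]\check NM$), we have $\check n_1\in a_{-t_kv}\check Va_{t_kv}$ for all large $k$, since $a_{t_kv}\check n_1a_{-t_kv}\to e\in\check V$. Hence $\cal O_2a_{t_kv}\cap\cal O_1\ne\emptyset$ for all large $k$, which gives the required $t_k\to+\infty$.

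I expect the main obstacle to be making the box-flowing estimate precise: showing rigorously that the forward image of a product neighborhood inside $\cal O_2$ converges locally to the dense unstable horosphere $[h_0]\check NM$, with all error factors controlled. This rests on the uniform contraction/expansion guaranteed by $v\in\inte\fa^+$ and on careful bookkeeping of the $M$-coordinate, which is precisely why the statement is phrased on $\Omega/M$ rather than on $\Omega$. The remaining ingredients---the $w_0$-symmetry lemma, \Cref{NMhoro}, and the Zariski-density selection of $[g_0]\in\Omega$---are comparatively routine.
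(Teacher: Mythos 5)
Your proposal is correct and follows essentially the same route as the paper: both arguments place a small product neighborhood around a point of the dense stable horosphere inside one open set, flow it forward along $a_{t_kv}$ so that the expanding $\check N$-factor sweeps out arbitrarily large pieces of the unstable horosphere at the forward limit point, and then use density of that $\check N M$-orbit (obtained from Theorem \ref{NMhoro} together with the $w_0$-symmetry lemma) to hit the second open set. The technical box-flowing estimate you flag is exactly the computation the paper carries out with the sets $\check N_{T_i}$, and it goes through as you expect.
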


\begin{proof}
Assume that $\La=\La_h$; so
  the $NM$-action on $\cal E$ is minimal.    Suppose that $ \Lambda_v\ne \emptyset$ for some $v\in \inte\fa^+.$  
   We claim that for any  ${\cal O}_1, {\cal O}_2$ be two right $M$-invariant open subsets intersecting $\Omega$, 
 ${\cal O}_1 \exp (t_iv) \cap {\cal O}_2\ne \emptyset$ for some sequence $t_i \to +\infty$. 
Choose $x=[g]\in\Omega $ so that $g^+\in \Lambda_v$.
Then there exists $\ga_i\in \Ga$ and $t_i\to +\infty$ such that
$\gamma_i g a_{t_i v}$ converges to some $g_0$. Note that $x_0:=[g_0]\in \Om$.
So write $\gamma_i g a_{t_i v}= g_0 h_i$ with $h_i\to e$.
By the $NM$-minimality assumption, $x NM $ intersects every open subset of $\Om$. Since $v\in \inte \fa^+$ and hence
$a_{-tv} n a_{tv}\to e$ as $t\to +\infty$, 
we may assume without loss of generality that $x \in {\cal O}_1$.
Choose an open neighborhood $U$ of $e$ in $G$ so
that ${\cal O}_1\supset x U M$.
Note that there exists a sequence $T_i\to \infty$ as $i\to \infty$ such that for all $i$,
$$x U M a_{t_i v} \supset x a_{t_i v} a_{-t_i v} \check{N}_\epsilon M a_{t_i v} \supset 
x_0 h_i \check{N}_{T_i}, $$
where $ \check{N}_{R}=\check{N} \cap B_R^G $ is the the set of elements of $ \check{N} $ of norm$ \leq R $.
So
${\cal O}_1 a_{t_i v}\supset x_0h_i \check{N}_{T_i}$.

Choose an open neighborhood $V$ of $e$ in $G$ and some open subset ${\cal O}_2'$ intersecting $\Om$ so that ${\cal O}_2\supset {\cal O}_2' V$. 
Since $x_0\check{N}M$ is dense in $\Om$, $x_0 n\in {\cal O}_2'$ for some $n\in \check{N}$.
Hence $x_0 h_i n = x_0 n (n^{-1} h_i n)\in {\cal O}_2' V\subset {\cal O}_2 $ for all $i$ large enough so that $n^{-1} h_i n\in V$.
Therefore for all $i$ such that $n\in \check{N}_{T_i}$,
we get $$x_0h_in\in {\cal O}_1 a_{t_i v}\cap {\cal O}_2\ne \emptyset.$$ This proves the first claim.

Now suppose that $\ga\in \Ga$ is a loxodromic element with $y_{\ga}, y_{\ga^{-1}}\in \La_h$.
Write $\gamma =g m a_{ v} g^{-1} $ for some $g\in G$ and $m\in M$. Since $y_\ga=g^+$ and $y_{\ga^{-1}}=gw_0^+,$ we have
 each 
$[g]NM$
and $[g]w_0 NM$ contains $\Omega$ in its closure.
Now in the notation of the proof of the first claim, note that $x_0=[g_0]\in [g]M$
since $[g]\exp (\br v) M$ is closed. Therefore each $\overline{x_0 NM}$ and
$\overline{x_0 \check{N}M}$ contains
$ \Omega$. Based on this, the same argument as above shows the topological transitivity of $\exp \br v$, which finishes the proof since $v=\lambda(\ga)$.
\end{proof}

Since $\L$ is invariant under the opposition involution $\i$ and
$\la(\ga)=\i \la(\ga^{-1})$
for any loxodromic element $\ga\in \Ga$, the Jordan projection $\lambda(\ga)$ belongs to $ \partial \L$ if and only if the Jordan projection $\lambda(\ga^{-1})$ belongs to $ \partial \L$. 
Together with the result of Dang and Gloriuex \cite[Proposition 4.7]{DG} which say that
 $\exp (\br u)$ is not topologically transitive on $\Omega/M$ for any $u\in \partial \L\cap \inte \fa^+$,
 Proposition \ref{lah} implies the following: 
\begin{cor}\;
\begin{enumerate}[leftmargin=*]
 \item If $\La_v\ne \emptyset$ for some $v\in \partial \L\cap \inte \fa^+$, then $$\La\ne \La_h.$$
   	\item For any loxodromic element $\ga\in \Ga$, we have $\lambda(\gamma)\in \partial \L$ if and only if  $$\{y_\ga, y_{\gamma^{-1}}\} \not\subset \La_h .$$
   	Hence, if $\La=\La_h$, then $\lambda(\Ga)\subset \inte \L$.
   	\end{enumerate} 
\end{cor}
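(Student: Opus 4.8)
The plan is to obtain both assertions by confronting the topological transitivity statements of Proposition \ref{lah} with the non-transitivity result of Dang--Glorieux \cite[Proposition 4.7]{DG}, namely that $\exp(\br u)$ is \emph{not} topologically transitive on $\Omega/M$ for any $u\in\partial\L\cap\inte\fa^+$. All the analytic content is thus already in place, and the corollary is a matter of combining these inputs.

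For part (1) I would argue by contradiction. Assume $\La=\La_h$. Since $v\in\inte\fa^+$ and $\La_v\ne\emptyset$ by hypothesis, Proposition \ref{lah}(1) shows that $\exp(\br v)$ is topologically transitive on $\Omega/M$; but as $v\in\partial\L\cap\inte\fa^+$, \cite[Proposition 4.7]{DG} asserts the opposite. This contradiction forces $\La\ne\La_h$.

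For part (2) I would prove the logically equivalent statement obtained by negating both sides: for a loxodromic $\ga\in\Ga$ one has $\lambda(\ga)\in\inte\L$ if and only if $\{y_\ga,y_{\ga^{-1}}\}\subset\La_h$. The implication $\lambda(\ga)\in\inte\L\Rightarrow\{y_\ga,y_{\ga^{-1}}\}\subset\La_h$ is the positive half and uses only internal tools: writing $\ga=g a_u m g^{-1}$ with $u=\lambda(\ga)$, the point $[g]$ is $u$-periodic, so $y_\ga=g^+$ is $u$-horospherical by Lemma \ref{hor}; since $u\in\inte\L$, Proposition \ref{allu} gives $\La_h(u)=\La_h$, whence $y_\ga\in\La_h$. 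For $y_{\ga^{-1}}$ I would invoke that the opposition involution $\i$ preserves $\L$ (hence also $\inte\L$) and that $\lambda(\ga^{-1})=\i\lambda(\ga)\in\inte\L$, and then repeat the argument verbatim with $\ga^{-1}$ in place of $\ga$. The reverse implication is where the Dang--Glorieux result re-enters: if $\{y_\ga,y_{\ga^{-1}}\}\subset\La_h$, then Proposition \ref{lah}(2) makes $\exp(\br\lambda(\ga))$ topologically transitive on $\Omega/M$; since $\ga$ is loxodromic we have $\lambda(\ga)\in\inte\fa^+$, so were $\lambda(\ga)\in\partial\L$ this would contradict \cite[Proposition 4.7]{DG}, leaving $\lambda(\ga)\in\inte\L$.

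Finally, for the concluding clause: if $\La=\La_h$, then for every loxodromic $\ga\in\Ga$ the fixed points $y_\ga,y_{\ga^{-1}}$ lie in $\La=\La_h$, so part (2) yields $\lambda(\ga)\in\inte\L$; as the positive rays through Jordan projections of loxodromic elements are dense in $\L$, this is the asserted containment $\lambda(\Ga)\subset\inte\L$. I do not anticipate a genuine obstacle, since the substantive work already resides in Proposition \ref{lah} and in the external input \cite{DG}. The points demanding care are purely organizational: routing each biconditional through its contrapositive, treating $y_{\ga^{-1}}$ symmetrically via the $\i$-invariance of $\L$, and justifying the passage from $u$-horosphericity back to horosphericity, which is legitimate exactly because $u\in\inte\L$.
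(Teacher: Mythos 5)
Your argument is correct and follows the paper's intended derivation exactly: part (1) and the ``only if'' half of part (2) come from playing Proposition \ref{lah} against the Dang--Glorieux non-transitivity result, the ``if'' half of part (2) comes from Lemma \ref{hor} together with Proposition \ref{allu} applied to both $\ga$ and $\ga^{-1}$ via the $\op{i}$-invariance of $\L$, and the final clause is immediate. (The density of rays through Jordan projections in your last paragraph is not actually needed; applying part (2) to each loxodromic element already suffices.)
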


\end{document}